\newtheorem{Algorithm}{Algorithm}
\begin{document}
\title{Parallel-in-Time with Fully Finite Element Multigrid for 2-D Space-fractional Diffusion Equations}


\author[Yue X Q et.~al.]{X.~Q.~Yue\affil{1}, S.~Shu\affil{1}, X.~W.~Xu\affil{2},
                         W.~P.~Bu\affil{1} and K.~J.~Pan\affil{3}\comma\corrauth}
\address{\affilnum{1}\ School of Mathematics and Computational Science,
         Hunan Key Laboratory for Computation and Simulation in Science and Engineering,
         Xiangtan University, Hunan 411105, China \\
         \affilnum{2}\ Institute of Applied Physics and Computational Mathematics,
         Beijing 100088, China \\
         \affilnum{3}\ School of Mathematics and Statistics,
         Central South University, Hunan 410083, China}
\emails{{\tt yuexq@xtu.edu.cn} (X.~Q.~Yue), {\tt shushi@xtu.edu.cn} (S.~Shu),
{\tt xwxu@iapcm.ac.cn} (X.~W.~Xu), {\tt weipingbu@lsec.cc.ac.cn} (W.~P.~Bu),
{\tt pankejia@hotmail.com} (K.~J.~Pan)}

\begin{abstract}
The paper investigates a non-intrusive parallel time integration with multigrid
for space-fractional diffusion equations in two spatial dimensions. We firstly obtain a fully discrete
scheme via using the linear finite element method to discretize spatial and temporal derivatives to propagate solutions.
Next, we present a non-intrusive time-parallelization and its two-level convergence analysis,
where we algorithmically and theoretically generalize the MGRIT to time-dependent fine time-grid propagators. Finally,
numerical illustrations show that the obtained numerical scheme possesses the saturation error order,
theoretical results of the two-level variant deliver good predictions, and significant speedups can be achieved
when compared to parareal and the sequential time-stepping approach.
\end{abstract}

\ams{35R11, 65F10, 65F15, 65N55}
\keywords{Space-fractional diffusion equations, space-time finite element, multigrid-in-time, parallel computing.}

\maketitle

\section{Introduction}
\label{sec1}

In recent years, mathematical models with fractional derivatives and integrals
attract a wide interest of scientists in a variety of fields (including physics, biology and chemistry, etc.),
owing to their potences in descriptions of memory and heredity \cite{k-01}. Particularly,
fractional diffusion equations are shown to afford investigations on subdiffusive phenomena and L\'{e}vy fights \cite{m-01}.
In this article, we are interested in a class of two-dimensional space-fractional diffusion equations (SFDEs)
\begin{align}\label{chp-01-01}
  &\frac{\partial u(x,y,t)}{\partial t}=K_x\frac{\partial^{2\beta}u(x,y,t)}{\partial|x|^{2\beta}}
  +K_y\frac{\partial^{2\gamma}u(x,y,t)}{\partial|y|^{2\gamma}}+f(x,y,t),~t\in I=(0,T],~(x,y)\in\Omega\\
  &\label{chp-01-02}u(x,y,t)=0,~t\in I,~(x,y)\in\partial\Omega\\
  &\label{chp-01-03}u(x,y,0)=\psi_0(x,y),~(x,y)\in\Omega
\end{align}
with orders $1/2<\beta$, $\gamma<1$, constants $K_x$, $K_y>0$, solution domain $\Omega=(a,b)\times(c,d)$,
and Riesz fractional derivatives
\begin{eqnarray*}
  \frac{\partial^{2\beta}u}{\partial|x|^{2\beta}}=-\frac{1}{2\cos(\beta\pi)}({}_xD_L^{2\beta}u + {}_xD_R^{2\beta}u),~~
  \frac{\partial^{2\gamma}u}{\partial|y|^{2\gamma}}=-\frac{1}{2\cos(\gamma\pi)}({}_yD_L^{2\gamma}u + {}_yD_R^{2\gamma}u),
\end{eqnarray*}
where
\begin{align*}
  &{}_xD_L^{2\beta}u = \frac{1}{\Gamma(2-2\beta)}\frac{\partial^2}{\partial x^2}\int_a^x(x-s)^{1-2\beta}u(s,y,t)ds,~~
  {}_xD_R^{2\beta}u = \frac{1}{\Gamma(2-2\beta)}\frac{\partial^2}{\partial x^2}\int_x^b(s-x)^{1-2\beta}uds,\\
  &{}_yD_L^{2\gamma}u = \frac{1}{\Gamma(2-2\gamma)}\frac{\partial^2}{\partial y^2}\int_c^y(y-r)^{1-2\gamma}u(x,r,t)dr,~~
  {}_yD_R^{2\gamma}u = \frac{1}{\Gamma(2-2\gamma)}\frac{\partial^2}{\partial y^2}\int_y^d(r-y)^{1-2\gamma}udr.
\end{align*}

Since closed-form analytical solutions of fractional models are rarely accessible in practice,
the numerical solutions become very prevalent to empower their successful applications.
In literatures, numerical methods of SFDEs proposed to achieve high accuracy and efficiency include
finite difference \cite{w-01,c-01,h-01,c-02,c-03,l-01,l-04,h-04}, finite element (FE) \cite{b-02,d-01,f-01,y-03,y-01,y-02},
finite volume \cite{h-02,j-01,k-02} and spectral (element) \cite{s-01,b-01,p-01} methods.
It must be emphasized that no matter which discretization is applied,
there usually persists intensive computational task in nonlocality caused by fractional differential operators \cite{g-03}.
Numerous scholars are working to identify fast algorithms most appropriate to tackle this challenge,
see \cite{p-03,p-02,g-01,j-02,s-02,c-04} and related references therein. Except for these fast solutions, 
parallel computing can be viewed as another potential technique or even a basic strategy.
Gong et al. presented MPI-based and GPU-based parallel algorithms for one-dimensional Riesz SFDEs \cite{g-04,w-03},
whose speedups are both achieved by spatial parallelism with sequential time-stepping approach,
using some time propagator to integrate from one time to the next.
However, future computing speed must rely on the increased concurrency provided by more, instead of faster, processors.
An immediate consequence of this is that solution algorithms, limited to spatial parallelism, for problems with evolutionary behavior
entail long overall computation time, often exceeding computing resources available to resolve multidimensional SFDEs. Thus,
algorithms achieving parallelism in time have been of especially high demand over the past decade. Currently, parareal in time \cite{l-02}
and multigrid-reduction-in-time (MGRIT) \cite{f-02} are two active choices. Wu et al. analyzed convergence properties of the parareal algorithm
for SFDEs via certain underlying ODEs in constant time-steps \cite{w-04,w-02}, but they uninvolved large-scale testings.
Observe that parareal can be interpreted as a two-level multigrid (reduction) method \cite{f-02,g-02},
its concurrency is still limited because of the large sequential coarse-grid solve.
MGRIT, a non-intrusively and truly multilevel algorithm implemented in the open-source XBraid \cite{x-01} with optimal parallel communication,
counteracts this and enables us to approximate simultaneously the evolution over all time points.
The non-intrusive nature of MGRIT relies upon modalities of fine and coarse time-grid propagators rather than their internals.
It has been proven to be effective and analyzed sharply in the two-level setting
for integer order basic parabolic and hyperbolic problems \cite{d-02}, 
however, with limitations on analysis that they only consider linear problems and the temporal grid is uniform, i.e.,
fine time-grid propagators are all the same. Furthermore, 
from the survey of references, there are no calculations of the MGRIT algorithm to SFDEs,
nor to FE discretizations of parabolic and hyperbolic problems in time.

%
%

The main aim of the paper is to propose and analyze a non-intrusive optimal-scaling MGRIT algorithm for
space-time FE discretizations of problem \eqref{chp-01-01}-\eqref{chp-01-03}
in uniform and nonuniform temporal partitions, where we shall extend the scope of the MGRIT method
and develop a library of MGRIT modifications to time-dependent propagators.
The outline of our presentation proceeds as follows. In Section 2, we derive a fully discrete FE scheme.
Section 3 introduces the MGRIT solver followed by its two-level convergence analysis to the obtained scheme.
In Section 4, we report some numerical results to illustrate optimal convergence rates in both space and time,
present theoretical confirmations and analyze weak and strong scaling studies to show benefits.
Finally, relevant results are summarized and follow-up work are drawn in Section 5.


\section{Space-time FE discretization for SFDEs}
\label{sec2}

This section deals with the construction of our space-time FE scheme for SFDEs.
Here we denote by $(\cdot,\cdot)_{L^2(\Omega)}$ and $\|\cdot\|_{L^2(\Omega)}$ the inner product and its associated norm on $L^2(\Omega)$.
First, we introduce some fractional derivative spaces.


\begin{definition} (Left and right fractional derivative spaces)
  For a given constant $\mu>0$, define norms
\begin{align*}
  &\|u\|_{J_L^\mu(\Omega)} := (\|u\|^2_{L^2(\Omega)}+\|{}_xD_L^\mu u\|^2_{L^2(\Omega)}+\|{}_yD_L^\mu u\|^2_{L^2(\Omega)})^{\frac{1}{2}},\\
  &\|u\|_{J_R^\mu(\Omega)} := (\|u\|^2_{L^2(\Omega)}+\|{}_xD_R^\mu u\|^2_{L^2(\Omega)}+\|{}_yD_R^\mu u\|^2_{L^2(\Omega)})^{\frac{1}{2}}.
\end{align*}
  Let $J_{L,0}^\mu(\Omega)$ and $J_{R,0}^\mu(\Omega)$ be closures of $C_0^\infty(\Omega)$
  in regard to $\|\cdot\|_{J_L^\mu(\Omega)}$ and $\|\cdot\|_{J_R^\mu(\Omega)}$, respectively.
\end{definition}

\begin{definition} (Fractional Sobolev space)
  For a given constant $\mu>0$, define the norm
\begin{eqnarray*}
  \|u\|_{H^\mu(\Omega)} := \Big{[}\int_\Omega(1+|\xi|^2)^\mu|\tilde{u}(\xi)|^2d\xi\Big{]}^{\frac{1}{2}},
\end{eqnarray*}
  where $\tilde{u}$ is the Fourier transform of $u$.
  Let $H_0^\mu(\Omega)$ be the closure of $C_0^\infty(\Omega)$ in $\|\cdot\|_{H^\mu(\Omega)}$ norm sense.
\end{definition}

\begin{remark}
  The mathematical equivalence among $J_{L,0}^\mu(\Omega)$, $J_{R,0}^\mu(\Omega)$ and $H_0^\mu(\Omega)$ with related norms
  can be proved to the case where $\mu\in(1/2,1)$, analogously to the work \cite{e-01}.
\end{remark}



%

Utilizing Lemma 5 in \cite{b-02}, the weak formulation of problem \eqref{chp-01-01}-\eqref{chp-01-03} is derived in reference to $t>0$:
given $\psi_0\in L^2(\Omega)$, $g\in L^2(\Omega,I)$ and $Q_t:=\Omega\times(0,t)$,
hunting for $u\in \mathcal{G}:=[H_0^\beta(\Omega)\cap H_0^\gamma(\Omega)]\times H^1(I)$ subject to $u(x,y,0)=\psi_0(x,y)$ and
\begin{eqnarray}\label{chp-02-01}
  \int_0^t(\frac{\partial u}{\partial \varrho},v)_{L^2(\Omega)}d\varrho+B^t_\Omega(u,v)=\int_0^t(f,v)_{L^2(\Omega)}d\varrho,~
  \forall v\in \mathcal{G}^*:=[H_0^\beta(\Omega)\cap H_0^\gamma(\Omega)]\times L^2(I),
\end{eqnarray}
where
\begin{align}\label{chp-02-07}
  &B^t_\Omega(u,v)=\int_0^t\frac{K_x}{2\cos(\beta\pi)}\Big{[}({}_xD^\beta_Lu,{}_xD^\beta_Rv)_{L^2(\Omega)}
  +({}_xD^\beta_Ru,{}_xD^\beta_Lv)_{L^2(\Omega)}\Big{]}d\varrho + \notag \\
  &\qquad\qquad\qquad\qquad\int_0^t\frac{K_y}{2\cos(\gamma\pi)}\Big{[}({}_yD^\gamma_Lu,{}_yD^\gamma_Rv)_{L^2(\Omega)}
  +({}_yD^\gamma_Ru,{}_yD^\gamma_Lv)_{L^2(\Omega)}\Big{]}d\varrho.
\end{align}

For the depiction of our space-time FE discretization, we define a (possibly nonuniform) temporal partition $0=t_0<t_1<\cdots<t_N=T$
and a uniform spatial triangulation $\mathcal{T}_h$ with constant spacings $h_x=(b-a)/M_\beta$ and $h_y=(d-c)/M_\gamma$,
let $\mathcal{K}_h$ be the set of interior mesh points in $\mathcal{T}_h$, denote the $j$-th subintervals $I_j=(t_{j-1},t_j)$ and
$\tilde{I}_j=(0,t_j)$ for $1\le j\le N$, and $\Omega_h=\{e_h:e_h\in\mathcal{T}_h\}$. 
We choose the usual spaces in tensor products
\begin{eqnarray*}
  \mathcal{G}_n=\mathcal{V}_h(\Omega_h)\times\mathcal{V}_\tau(\tilde{I}_n),~~
  \mathcal{G}^*_n=\mathcal{V}_h(\Omega_h)\times\mathcal{V}^*_\tau(I_n),
\end{eqnarray*}
where $\mathcal{V}_h(\Omega_h)=\{w_h\in H_0^\beta(\Omega)\cap H_0^\gamma(\Omega)\cap C(\overline{\Omega}):
w_h|_{e_h}\in\mathcal{P}_1(e_h),~\forall e_h\in\mathcal{T}_h\}$,
$\mathcal{V}_\tau(\tilde{I}_n) = \{v_\tau\in \mathcal{C}(\overline{\tilde{I}_n}):
v_\tau(0)=1,~v_\tau(t)|_{I_j}\in\mathcal{P}_1(I_j),~j=1,\cdots, n\}$ and
$\mathcal{V}^*_\tau(I_n) = \{v_\tau\in L^2(I_n):v_\tau(t)|_{I_n}\in\mathcal{P}_0(I_n)\}$
with $\mathcal{P}_k$ as the space of all polynomials of degree $\leq k$.

Now a fully discrete FE approximation for \eqref{chp-02-01} is singled out immediately: for $Q_n:=\Omega_h\times I_n$,
to seek the solution $u_{h\tau}\in \mathcal{G}_n$ satisfying $u_{h\tau}(x,y,0)=\psi^I_0(x,y)$ and
\begin{eqnarray}\label{chp-02-02}
  \int_{t_{n-1}}^{t_n}(\frac{\partial u_{h\tau}}{\partial t},v_{h\tau})_{L^2(\Omega)}dt+B^n_\Omega(u_{h\tau},v_{h\tau})
  =\int_{t_{n-1}}^{t_n}(f,v_{h\tau})_{L^2(\Omega)}dt,~\forall v_{h\tau}\in \mathcal{G}^*_n,
\end{eqnarray}
where the function $\psi^I_0(x,y)\in\mathcal{G}_n$ approximates to the initial $\psi_0(x,y)\in L^2(\Omega)$, and
\begin{align*}
  & B^n_\Omega(u_{h\tau},v_{h\tau})
  = \int_{t_{n-1}}^{t_n}\frac{K_x}{2\cos(\beta\pi)}\Big{[}({}_xD^\beta_Lu_{h\tau},{}_xD^\beta_Rv_{h\tau})_{L^2(\Omega)}
  +({}_xD^\beta_Ru_{h\tau},{}_xD^\beta_Lv_{h\tau})_{L^2(\Omega)}\Big{]}dt + \notag \\
  &\qquad\qquad\qquad\qquad\int_{t_{n-1}}^{t_n}\frac{K_y}{2\cos(\gamma\pi)}\Big{[}({}_yD^\gamma_Lu_{h\tau},{}_yD^\gamma_Rv_{h\tau})_{L^2(\Omega)}
  +({}_yD^\gamma_Ru_{h\tau},{}_yD^\gamma_Lv_{h\tau})_{L^2(\Omega)}\Big{]}dt.
\end{align*}

Note that $\mathcal{G}^*_n$ and $u_{h\tau}$ can be respectively written as $\mathcal{G}^*_n=\textsf{span}\{\phi_l(x,y)\times 1,~l=1,\cdots,|\mathcal{K}_h|\}$
and $u_{h\tau}(x,y,t)=\sum_{k=0}^nu_h^k(x,y)\mathcal{L}_k(t)$, with $\phi_l(x,y)$ being the Lagrange linear shape function at point
$P_l(x_l,y_l)\in \mathcal{K}_h$, as well as
\begin{eqnarray*}
  \mathcal{L}_0(t)=\left \{
    \begin{aligned}
    & \frac{t_1-t}{t_1-t_0},~t \in I_1 \\
    & 0,~t\in \tilde{I}_n\setminus I_1
    \end{aligned}
  \right.,~~
  \mathcal{L}_k(t)=\left \{
    \begin{aligned}
    & \frac{t_{k+1}-t}{t_{k+1}-t_k},~t \in I_{k+1} \\
    & \frac{t-t_{k-1}}{t_k-t_{k-1}},~t \in I_k \\
    & 0,~t\in \tilde{I}_n\setminus (I_k \cup I_{k+1})
    \end{aligned}
  \right.,~~
  \mathcal{L}_n(t)=\left \{
    \begin{aligned}
    & \frac{t-t_{n-1}}{t_n-t_{n-1}},~t \in I_n \\
    & 0,~t\in \tilde{I}_n\setminus I_n
    \end{aligned}
  \right..
\end{eqnarray*}
By simple algebraic calculations, we deduce
\begin{eqnarray*}
  (\mathcal{L}_n,1)_{L^2(I_n)}=(\mathcal{L}_{n-1},1)_{L^2(I_n)}=\frac{t_n-t_{n-1}}{2};~~~
  (\mathcal{L}_k,1)_{L^2(I_n)}=0,~k<n-1.
\end{eqnarray*}
Hence, to compute $\mathcal{U}_n=(u^n_1,u^n_2,\cdots,u^n_{|\mathcal{K}_h|})^T$ satisfying Eq. \eqref{chp-02-02} we solve
\begin{eqnarray}\label{chp-02-03}
  \Big{[}\mathcal{M}_h+\frac{t_n-t_{n-1}}{2}(K_x\mathcal{A}^\beta_x+K_y\mathcal{A}^\gamma_y)\Big{]}\mathcal{U}_n=\mathcal{F}_n
  +\Big{[}\mathcal{M}_h-\frac{t_n-t_{n-1}}{2}(K_x\mathcal{A}^\beta_x+K_y\mathcal{A}^\gamma_y)\Big{]}\mathcal{U}_{n-1},
\end{eqnarray}
where initial guess $\mathcal{U}_0=(u^0_1,u^0_2,\cdots,u^0_{|\mathcal{K}_h|})^T$ with $u^0_i=\psi^I_0(x_i,y_i)$,
vector $\mathcal{F}_n=(f^n_1,f^n_2,\cdots,f^n_{|\mathcal{K}_h|})^T$ with $f^n_j=\int_{t_{n-1}}^{t_n}(f,\phi_j)_{L^2(\Omega)}dt$,
mass matrix $\mathcal{M}_h$ takes the block-tridiagonal form
\begin{eqnarray*}\label{chp-02-04}
  \frac{h_xh_y}{12}\left(
   \begin{array}{ccccc}
    \mathcal{M}_1 & \mathcal{M}_2 &   &  &  \\
    \mathcal{M}^T_2 & \mathcal{M}_1 & \mathcal{M}_2 &  &  \\
      & \ddots & \ddots & \ddots &  \\
      &  & \mathcal{M}^T_2 & \mathcal{M}_1 & \mathcal{M}_2 \\
      &  &   & \mathcal{M}^T_2 & \mathcal{M}_1
   \end{array}
  \right)_{(M_\gamma-1)}
\end{eqnarray*}
with
\begin{eqnarray*}
  \mathcal{M}_1 = \left(
   \begin{array}{ccccc}
    6 & 1 &   &  &  \\
    1 & 6 & 1 &  &  \\
      & \ddots & \ddots & \ddots &  \\
      &  & 1 & 6 & 1 \\
      &  &   & 1 & 6
   \end{array}
  \right)_{(M_\beta-1)}~~\mbox{and}~~
  \mathcal{M}_2 = \left(
   \begin{array}{ccccc}
    1 & 1 &   &  &  \\
      & 1 & 1 &  &  \\
      &  & \ddots & \ddots &  \\
      &  &   & 1 & 1 \\
      &  &   &   & 1
   \end{array}
  \right)_{(M_\beta-1)},
\end{eqnarray*}
stiffness matrices
\begin{eqnarray*}\label{chp-02-05}
  \mathcal{A}^\beta_x=\frac{h^{1-2\beta}_xh_y}{2\cos(\beta\pi)\Gamma(5-2\beta)}\left(
   \begin{array}{ccccc}
    \mathcal{A}_1^\beta & \mathcal{A}_2^\beta &   &  &  \\
    (\mathcal{A}_2^\beta)^T & \mathcal{A}_1^\beta & \mathcal{A}_2^\beta &  &  \\
      & \ddots & \ddots & \ddots &  \\
      &  & (\mathcal{A}_2^\beta)^T & \mathcal{A}_1^\beta & \mathcal{A}_2^\beta \\
      &  &   & (\mathcal{A}_2^\beta)^T & \mathcal{A}_1^\beta
   \end{array}
  \right)_{(M_\gamma-1)}
\end{eqnarray*}
and
\begin{eqnarray}\label{chp-02-06}
  \mathcal{A}^\gamma_y=\frac{h_xh^{1-2\gamma}_y}{2\cos(\gamma\pi)\Gamma(5-2\gamma)}\mathcal{P}^T_\pi\left(
   \begin{array}{ccccc}
    \mathcal{A}_1^\gamma & \mathcal{A}_2^\gamma &   &  &  \\
    (\mathcal{A}_2^\gamma)^T & \mathcal{A}_1^\gamma & \mathcal{A}_2^\gamma &  &  \\
      & \ddots & \ddots & \ddots &  \\
      &  & (\mathcal{A}_2^\gamma)^T & \mathcal{A}_1^\gamma & \mathcal{A}_2^\gamma \\
      &  &   & (\mathcal{A}_2^\gamma)^T & \mathcal{A}_1^\gamma
   \end{array}
  \right)_{(M_\beta-1)}\mathcal{P}_\pi
\end{eqnarray}
with the permutation matrix $\mathcal{P}_\pi$ produced in terms of the identity matrix by 
\begin{eqnarray*}
  \pi=\Big{\{}1,M_\beta,\cdots,1+(M_\beta-1)(M_\gamma-2);~\cdots;~M_\beta-1,2(M_\beta-1),\cdots,(M_\beta-1)(M_\gamma-1)\Big{\}},
\end{eqnarray*}
Toeplitz matrices $\mathcal{A}_l^\varrho=(a_{i,j}^{l,\varrho})_{(M_\varrho-1)\times(M_\varrho-1)}$
($\varrho=\beta,\gamma$) whose entries are in forms 
\begin{align*}
  \left \{
    \begin{aligned}
      & a^{1,\varrho}_{i,i}=2^{6-2\varrho}+16\varrho-40,\qquad\qquad
      \quad\qquad\qquad\qquad\qquad\quad ~~~i=1,\cdots,M_\varrho-1\\
      & a^{1,\varrho}_{j,j+1}=a^{1,\varrho}_{j+1,j}=2\cdot3^{4-2\varrho}+(2\varrho-6)2^{5-2\varrho}-16\varrho+34,
      \quad\qquad ~j=1,\cdots,M_\varrho-2\\
      & a^{1,\varrho}_{k,k+l}= a^{1,\varrho}_{k+l,k} = 4(4-2\varrho)[-(l-1)^{3-2\varrho}+2l^{3-2\varrho}-(l+1)^{3-2\varrho}]-2(l-2)^{4-2\varrho} \\
      & ~~~+4(l-1)^{4-2\varrho}-4(l+1)^{4-2\varrho}+2(l+2)^{4-2\varrho},\quad k=1,\cdots,M_\varrho-l-1;~l=2,\cdots,M_\varrho-2
    \end{aligned}
  \right.
\end{align*}
and
\begin{align*}
  \left \{
    \begin{aligned}
      & a^{2,\varrho}_{i,i}=4-2^{4-2\varrho}\varrho,\qquad\qquad\qquad\qquad\quad\qquad\qquad\qquad\qquad\qquad ~~~i=1,\cdots,M_\varrho-1\\
      & a^{2,\varrho}_{j,j+1}=4-2^{4-2\varrho}\varrho,\qquad\qquad\quad\qquad\qquad\qquad\qquad\qquad\qquad\qquad j=1,\cdots,M_\varrho-2\\
      & a^{2,\varrho}_{k,k+l}=(4-2\varrho)[(l-2)^{3-2\varrho}-(l-1)^{3-2\varrho}-l^{3-2\varrho}+(l+1)^{3-2\varrho}]+2(l-2)^{4-2\varrho}\\
      & ~~~-6(l-1)^{4-2\varrho}+6l^{4-2\varrho}-2(l+1)^{4-2\varrho},\quad k=1,\cdots,M_\varrho-l-1;~l=2,\cdots,M_\varrho-2\\
      & a^{2,\varrho}_{p+m,p}=(4-2\varrho)[(m-1)^{3-2\varrho}-m^{3-2\varrho}-(m+1)^{3-2\varrho}+(m+2)^{3-2\varrho}]+2(m-1)^{4-2\varrho}\\
      & ~~~-6m^{4-2\varrho}+6(m+1)^{4-2\varrho}-2(m+2)^{4-2\varrho},\quad p=1,\cdots,M_\varrho-m-1;~m=1,\cdots,M_\varrho-2
    \end{aligned}
  \right..
\end{align*}

\begin{remark}
  It is worthwhile to point out that $\mathcal{P}_\pi$ is introduced for a more economical memory requirement,
  since $\mathcal{A}^\gamma_y$ is naturally of the full block form,
  while \eqref{chp-02-06} computes $\mathcal{A}^\gamma_y$ without explicit generation of $\mathcal{P}_\pi$ or $\mathcal{P}^T_\pi$.
\end{remark}


\section{Time-parallelization and its two-level convergence analysis}
\label{sec3}

This section is devoted to the time-parallelization for the forward time-marching loop \eqref{chp-02-03}
as well as the two-level convergence analysis.

\subsection{The MGRIT algorithm}

Consider the forward problem \eqref{chp-02-03}, a one-step method of Eq. \eqref{chp-01-01},
which is equivalent to the block unit lower bidiagonal system
\begin{equation}\label{ode-03}
\mathcal{A}\mathcal{U}:=
\begin{bmatrix}
  I &        &        &   \\
  -\Psi_1 & I &        &   \\
        & \ddots & \ddots &   \\
        &        &  -\Psi_N & I
\end{bmatrix}
\begin{bmatrix}
\mathcal{U}_0 \\
\mathcal{U}_1 \\
\vdots \\
\mathcal{U}_N
\end{bmatrix}
=
\begin{bmatrix}
\mathcal{G}_0 \\
\mathcal{G}_1 \\
\vdots \\
\mathcal{G}_N
\end{bmatrix}:=\mathcal{G},
\end{equation}
where the $n$-th time-grid propagator
\begin{eqnarray}\label{equ-03-01}
  \Psi_n = \Big{[}\mathcal{M}_h+\frac{t_n-t_{n-1}}{2}(K_x\mathcal{A}^\beta_x+K_y\mathcal{A}^\gamma_y)\Big{]}^{-1}
  \Big{[}\mathcal{M}_h-\frac{t_n-t_{n-1}}{2}(K_x\mathcal{A}^\beta_x+K_y\mathcal{A}^\gamma_y)\Big{]}
\end{eqnarray}
and
\begin{eqnarray*}
  \mathcal{G}_0=\mathcal{U}_0,~~\mathcal{G}_n = \Big{[}\mathcal{M}_h+\frac{t_n-t_{n-1}}{2}(K_x\mathcal{A}^\beta_x
  +K_y\mathcal{A}^\gamma_y)\Big{]}^{-1}\mathcal{F}_n,~n=1,\cdots,N.
\end{eqnarray*}
Obviously, the inverse in Eq. \eqref{equ-03-01} corresponds to a spatial solve.

\begin{figure}[htp]
\centerline{
\includegraphics[scale=0.3]{./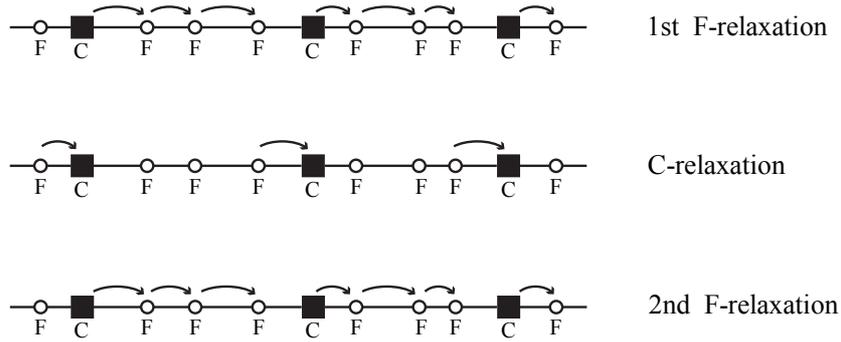}}
\caption{Schematic of the update sequence during FCF-relaxation with the coarsening factor $m=4$.}\label{fig-03-01}
\end{figure}

Regarding the MGRIT algorithm to solve the global space-time problem \eqref{ode-03}, various components have to be chosen.
Let $m$ be the coarsening factor in time and $N_c=N/m$, we define a coarse mesh
$\tilde{t}_i=t_{mi}$, $i=0,\cdots,N_c$. 
In this setting, all $\tilde{t}_i$ are C-points and the others are F-points.
FCF-relaxation depicted in Fig. \ref{fig-03-01}, an initial F-relaxation followed by a C-relaxation and then a second F-relaxation,
is often the most reliable choice to produce optimal and scalable multilevel iterations \cite{f-02}.
Define the injection at C-points as our restriction operator, 
and the injection followed by an F-relaxation over the fine-grid operator as our interpolation operator.
The multilevel hierarchy can be constructed by applying the above processes recursively.
Both sequential time-stepping and MGRIT are $\mathcal{O}(N)$ in terms of spatial solve, but MGRIT is highly concurrent.
About $\nu_t[2m/(m-1)+1]$ more processors are actually required in temporal concurrency to outweigh the extra work of MGRIT,
where $\nu_t$ is the number of MGRIT iterations \cite{f-02}.
Below is the MGRIT V-cycle algorithm, where $\mathcal{A}^{(0)}=\mathcal{A}$, 
$\mathcal{G}^{(0)}=\mathcal{G}$, $L=\log_mN$, $\mathcal{A}^{(l+1)}$, $\mathcal{R}^{(l)}$ and $\mathcal{P}^{(l)}$
($l=0,\cdots,L-1$) correspond to the $l$-th coarse-scale time re-discretization, restriction and interpolation, respectively.


\begin{Algorithm}\label{alg-03-01}
MGRIT algorithm with V-cycle: $\mathcal{U}^{(l)}=\textrm{MGRIT}(\mathcal{A}^{(l)},\mathcal{U}^{(l)},\mathcal{G}^{(l)})$.
\begin{description}

  \item[Step 1] Apply FCF-relaxation to $\mathcal{A}^{(l)} \mathcal{U}^{(l)} = \mathcal{G}^{(l)}$.

  \item[Step 2] Restrict the residual $\mathcal{G}^{(l+1)}=\mathcal{R}^{(l)}(\mathcal{G}^{(l)} - \mathcal{A}^{(l)} \mathcal{U}^{(l)})$.

  \item[Step 3] If $l+1=L$, then solve $\mathcal{A}^{(L)}\mathcal{U}^{(L)}=\mathcal{G}^{(L)}$; \\ Else perform
  $\mathcal{U}^{(l+1)}=\textrm{MGRIT}(\mathcal{A}^{(l+1)},0,\mathcal{G}^{(l+1)})$.

  \item[Step 4] Do the coarse-grid correction $\mathcal{U}^{(l)} = \mathcal{U}^{(l)} + \mathcal{P}^{(l)} \mathcal{U}^{(l+1)}$.

\end{description}
\end{Algorithm}

\begin{remark}
  To save computational work, Step 4 of Algorithm \ref{alg-03-01} is done by just correcting C-point values,
  and updating F-point values only when the Euclidean norm of the residual is small enough.
  The reason is that the correction at F-points is equivalent to an F-relaxation,
  which will be performed in Step 1 of the subsequent iteration.
\end{remark}

\subsection{Implementation details}

For numerical experiments, the general-purpose parallel-in-time library XBraid \cite{x-01} was employed.
Eq. \eqref{ode-03} by uniform temporal partitions can be solved by XBraid in a straightforward way,
whereas modifications on XBraid was done to time-dependent propagators as part of this study.
Wrapper routines were written in C and integrals were calculated by a quadrature formula.

One of the most noteworthy is that we split processors and communicators into spatial and temporal groups
for purpose of running parallelized modules in space, time or both.
In view of the linearity of Eq. \eqref{chp-01-01} and Toeplitz structures of $\mathcal{M}_h$,
$\mathcal{A}^\beta_x$ and $\mathcal{A}^\gamma_y$, all submatrices $\mathcal{M}_l$ and $\mathcal{A}_l^\varrho$
($l=1,2$; $\varrho=\beta,\gamma$) are only set up once to reduce computational work.
The space-time approximation is obtained until the space-time residual norm in the discrete $L^2$ sense
is less than the absolute halting tolerance $10^{-9}$,
where all spatial solves are accomplished by the Conjugate Gradient algorithm provided in the HYPRE 
library \cite{h-03} with $10^{-9}$ as the relative tolerance for stopping and the Euclidean norm used to measure solution progress.
Since the procedure involves only matrix-by-vector multiplications, matrices $\mathcal{M}_h$,
$\mathcal{A}^\beta_x$ and $\mathcal{A}^\gamma_y$ are kept in unassembled form to save on memory.
We skip any work on the first MGRIT down cycle.
In addition, we choose random initial guesses for the entire temporal grid hierarchy,
except that the initial condition \eqref{chp-01-03} is used at $t=0$ on the finest grid.

\subsection{Two-level convergence analysis}

Setting $L=2$, Algorithm \ref{alg-03-01} in this case reduces to a two-level scheme,
whose error propagator is characterized in the following lemma.

\begin{lemma}\label{lem-03-01}
  Let $\mathcal{E}$ be the error of \eqref{ode-03} and $m$ the coarsening factor.
  Define $\mathcal{B}_{\Psi,s}^l=\Psi_s\Psi_{s-1}\cdots\Psi_{s-l+1}$ with $\mathcal{B}_{\Psi,s}^0=I$.
  Then, after an iteration of the two-level version of Algorithm \ref{alg-03-01},
  the new error $\tilde{\mathcal{E}}$ at C-points satisfies
\begin{eqnarray*}
  \tilde{\mathcal{E}}_0=\tilde{\mathcal{E}}_m=0,~~\tilde{\mathcal{E}}_{km}=\sum_{i=0}^{k-2}\mathcal{B}_{\Psi^\Delta,k}^{k-2-i}
  (\mathcal{B}_{\Psi,(i+2)m}^m-\Psi^\Delta_{i+2})\mathcal{B}_{\Psi,(i+1)m}^m\mathcal{E}_{im},~k=2,\cdots,N_c,
\end{eqnarray*}
  where $\Psi^\Delta_j$ is the coarse time-grid propagator at the coarse time-scale point $\tilde{t}_j$.
\end{lemma}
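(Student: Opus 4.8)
The plan is to track the error vector $\mathcal{E}$ through the three sweeps of FCF-relaxation and then through the coarse-grid correction, exploiting the unit lower bidiagonal structure of $\mathcal{A}$ so that each sweep acts as a forward substitution. Throughout I identify the C-points with the fine indices $km$ ($k=0,\dots,N_c$) and the F-points with the remaining indices, and I work with the error $\mathcal{E}=\mathcal{U}^\star-\mathcal{U}$, which obeys the same bidiagonal relations as the residual equation, namely $\mathcal{A}\mathcal{E}=\mathcal{G}-\mathcal{A}\mathcal{U}$ at every row.

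First I would establish the action of a single relaxation sweep on the error. Since F-relaxation solves the F-point rows while freezing the C-points, it leaves every C-point error unchanged and sets, for an F-point $j$ with $km<j<(k+1)m$, $\mathcal{E}_j=\mathcal{B}_{\Psi,j}^{\,j-km}\mathcal{E}_{km}$. Dually, C-relaxation freezes the F-points and solves the C-point rows, so for $k\ge 1$ it produces $\mathcal{E}_{km}=\Psi_{km}\mathcal{E}_{km-1}$. Composing F, then C, then F, and using $\Psi_{km}\mathcal{B}_{\Psi,km-1}^{\,m-1}=\mathcal{B}_{\Psi,km}^{\,m}$, I obtain that after FCF-relaxation the C-point error is $\mathcal{E}^{\mathrm{F}}_{km}=\mathcal{B}_{\Psi,km}^{\,m}\mathcal{E}_{(k-1)m}$, while the preceding F-point carries $\mathcal{E}^{\mathrm{F}}_{km-1}=\mathcal{B}_{\Psi,km-1}^{\,m-1}\mathcal{B}_{\Psi,(k-1)m}^{\,m}\mathcal{E}_{(k-2)m}$ for $k\ge 2$.

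Next I would form the restricted residual. Using the $km$-th row of $\mathcal{A}$, the post-FCF residual at a C-point is $r_{km}=\mathcal{E}^{\mathrm{F}}_{km}-\Psi_{km}\mathcal{E}^{\mathrm{F}}_{km-1}$, which collapses to $r_{km}=\mathcal{B}_{\Psi,km}^{\,m}\big(\mathcal{E}_{(k-1)m}-\mathcal{B}_{\Psi,(k-1)m}^{\,m}\mathcal{E}_{(k-2)m}\big)$ for $k\ge 2$ and, crucially, to $r_m=0$, which is the source of $\tilde{\mathcal{E}}_m=0$. Injection transfers $r_{km}$ to the coarse grid, and since the coarse operator built from $\Psi^\Delta$ is again unit lower bidiagonal, its solve is the forward substitution $\mathcal{U}^c_k=\Psi^\Delta_k\mathcal{U}^c_{k-1}+r_{km}$. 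The coarse-grid correction updates C-points by injection, so $\tilde{\mathcal{E}}_{km}=\mathcal{E}^{\mathrm{F}}_{km}-\mathcal{U}^c_k$. Substituting the expression for $r_{km}$ and writing $\mathcal{U}^c_{k-1}=\mathcal{E}^{\mathrm{F}}_{(k-1)m}-\tilde{\mathcal{E}}_{(k-1)m}$ yields the two-term recurrence $\tilde{\mathcal{E}}_{km}=(\mathcal{B}_{\Psi,km}^{\,m}-\Psi^\Delta_k)\mathcal{B}_{\Psi,(k-1)m}^{\,m}\mathcal{E}_{(k-2)m}+\Psi^\Delta_k\tilde{\mathcal{E}}_{(k-1)m}$ with base cases $\tilde{\mathcal{E}}_0=\tilde{\mathcal{E}}_m=0$. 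Unrolling this recurrence by induction on $k$, so that the accumulated $\Psi^\Delta$ factors form $\mathcal{B}_{\Psi^\Delta,k}^{\,k-2-i}$ and the reindexing $j=i+2$ shifts the summation, reproduces the stated closed form.

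I expect the main obstacle to be the bookkeeping once the propagators are genuinely time-dependent: because the temporal grid may be nonuniform, the ordered products $\mathcal{B}_{\Psi,s}^{\,l}$ and $\mathcal{B}_{\Psi^\Delta,k}^{\,l}$ no longer reduce to matrix powers, so each composition must respect the index order and one must keep careful track of which index is a C- or F-point during every sweep. The delicate point inside this is verifying the cancellation $r_m=0$ and correctly treating the boundary cases $k=0,1$, since these are precisely what force the sum to begin at $i=0$ and guarantee $\tilde{\mathcal{E}}_0=\tilde{\mathcal{E}}_m=0$.
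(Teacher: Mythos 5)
Your proposal follows essentially the same route as the paper's proof: propagate the error through the F-, C- and second F-sweeps, form the injected residual at the C-points, forward-solve the unit lower bidiagonal coarse system, and subtract the correction — the paper merely writes the coarse solve in closed form where you unroll an equivalent two-term recurrence. One small correction to the base case: the restricted residual at the first C-point is $r_m=\mathcal{B}_{\Psi,m}^{m}\mathcal{E}_0$, not $0$ (it vanishes only when $\mathcal{E}_0=0$), and $\tilde{\mathcal{E}}_m=0$ comes from the exact cancellation $\mathcal{B}_{\Psi,m}^{m}\mathcal{E}_0-\mathcal{U}^{c}_1=0$ in the coarse-grid correction rather than from $r_m=0$; this matters because the lemma's sum deliberately retains the $i=0$ term in $\mathcal{E}_0$.
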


\begin{proof}
  Let $\mathcal{V}=\mathcal{U}-\mathcal{E}$ be the approximation, 
  we have the update sequence during FCF-relaxation
\begin{eqnarray*}
  \mathcal{V}_{km-1}=\mathcal{B}_{\Psi,km-1}^{m-1}\mathcal{V}_{(k-1)m}+\sum_{i=1}^{m-1}\mathcal{B}_{\Psi,km-1}^{i-1}\mathcal{G}_{km-i},~k=1,\cdots,N_c;
  \quad \mbox{(the initial F-relaxation)} \\
  \mathcal{V}_0=\mathcal{G}_0,~\mathcal{V}_{km}=\mathcal{B}_{\Psi,km}^m\mathcal{V}_{(k-1)m}+\sum_{i=0}^{m-1}\mathcal{B}_{\Psi,km}^i\mathcal{G}_{km-i},
  ~k=1,\cdots,N_c;\quad \mbox{(the C-relaxation)} \\
  \left \{
    \begin{aligned}
    & \mathcal{V}_{m-1}=\mathcal{B}_{\Psi,m-1}^{m-1}\mathcal{G}_0+\sum_{i=1}^{m-1}\mathcal{B}_{\Psi,m-1}^{i-1}\mathcal{G}_{m-i} \\
    & \mathcal{V}_{km-1}=\mathcal{B}_{\Psi,km-1}^{2m-1}\mathcal{V}_{(k-2)m}+\\
    & \quad \sum_{i=0}^{m-1}\mathcal{B}_{\Psi,km-1}^{m-1+i} \mathcal{G}_{(k-1)m-i}
    +\sum_{i=1}^{m-1}\mathcal{B}_{\Psi,km-1}^{i-1}\mathcal{G}_{km-i},~k=2,\cdots,N_c
    \end{aligned}
  \right.. \quad \mbox{(the second F-relaxation)}
\end{eqnarray*}
  Notice that the exact solution $\mathcal{U}$ can be written in the form
\begin{eqnarray*}
  \sum_{i=0}^{m-1}\mathcal{B}_{\Psi,(k-1)m}^i\mathcal{G}_{(k-1)m-i}=\mathcal{U}_{(k-1)m}-\mathcal{B}_{\Psi,(k-1)m}^m\mathcal{U}_{(k-2)m},~k=2,\cdots,N_c,
\end{eqnarray*}
  which follows from the recursion \eqref{chp-02-03}. Hence, the residual at the C-points becomes
\begin{eqnarray*}
  \mathcal{G}^{(2)}_0=0,~~\mathcal{G}^{(2)}_1=\mathcal{B}_{\Psi,m}^m\mathcal{E}_0,~~
  \mathcal{G}^{(2)}_k=\mathcal{B}_{\Psi,km}^m(\mathcal{E}_{(k-1)m}-\mathcal{B}_{\Psi,(k-1)m}^m\mathcal{E}_{(k-2)m}),~k=2,\cdots,N_c.
\end{eqnarray*}
  Then we can get the coarse-grid solution
\begin{eqnarray*}
  \mathcal{U}^{(2)}_0=0,~~\mathcal{U}^{(2)}_k=\Psi^\Delta_k\mathcal{U}^{(2)}_{k-1}+\mathcal{G}^{(2)}_k,~k=1,\cdots,N_c.
\end{eqnarray*}
  which gives
\begin{align}\label{equ-03-09}
  \left \{
    \begin{aligned}
      & \mathcal{U}^{(2)}_1=\mathcal{G}^{(2)}_1\\
      & \mathcal{U}^{(2)}_k=\mathcal{B}_{\Psi,km}^m\mathcal{E}_{(k-1)m}+
      \sum_{i=0}^{k-2}\mathcal{B}_{\Psi^\Delta,k}^{k-2-i}(\Psi^\Delta_{i+2}-
      \mathcal{B}_{\Psi,(i+2)m}^m)\mathcal{B}_{\Psi,(i+1)m}^m\mathcal{E}_{im},~k=2,\cdots,N_c
    \end{aligned}
  \right..
\end{align}
  It follows by the subsequent correction at C-points that
\begin{eqnarray*}
  \tilde{\mathcal{E}}_{km}=\mathcal{U}_{km}-\mathcal{V}_{km}-\mathcal{U}^{(2)}_k
  =\mathcal{B}_{\Psi,km}^m\mathcal{E}_{(k-1)m}-\mathcal{U}^{(2)}_k,~k=0,\cdots,N_c.
\end{eqnarray*}
  The desired result follows immediately by plugging \eqref{equ-03-09} into the above equation.
\end{proof}

It is important to note that $\Psi^\Delta_j$ is introduced to approximate the ideal coarse time-stepper $\mathcal{B}_{\Psi,jm}^m$.
An obvious and effective choice of $\Psi^\Delta_j$ is to re-discretize problem \eqref{chp-01-01}-\eqref{chp-01-03} on the coarse time-grid, i.e.,
\begin{eqnarray}\label{equ-03-02}
  \Psi^\Delta_j = \Big{[}\mathcal{M}_h+\frac{\tilde{t}_j-\tilde{t}_{j-1}}{2}(K_x\mathcal{A}^\beta_x+K_y\mathcal{A}^\gamma_y)\Big{]}^{-1}
  \Big{[}\mathcal{M}_h-\frac{\tilde{t}_j-\tilde{t}_{j-1}}{2}(K_x\mathcal{A}^\beta_x+K_y\mathcal{A}^\gamma_y)\Big{]}.
\end{eqnarray}
At this point, a coarse time-step is roughly as expensive to solve as a fine time-step.

Next we wish to establish the error reduction factor of the two-level version of Algorithm \ref{alg-03-01},
similar to the one in \cite{d-02}, 
based on the fact that $\Psi_j$ defined by \eqref{equ-03-01} and $\Psi^\Delta_j$ defined by \eqref{equ-03-02}
can be simultaneously diagonalized by a unitary matrix $\mathcal{X}=(\mathcal{X}_1,\cdots,\mathcal{X}_{|\mathcal{K}_h|})$, which satisfies
\begin{eqnarray}\label{equ-03-04}
  \mathcal{X}^*[\mathcal{M}_h^{-1}(K_x\mathcal{A}^\beta_x+K_y\mathcal{A}^\gamma_y)]\mathcal{X}
  =\textsf{diag}(\sigma_1,\cdots,\sigma_{|\mathcal{K}_h|}).
\end{eqnarray}
This will yield the following two lemmas.

\begin{lemma}\label{lem-03-02}
  The eigenvalue $\sigma_k$ in Eq. \eqref{equ-03-04} is real and positive for $k=1,\cdots,|\mathcal{K}_h|$.
\end{lemma}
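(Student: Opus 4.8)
The plan is to recognize the numbers $\sigma_k$ in \eqref{equ-03-04} as the eigenvalues of the generalized symmetric--definite pencil $(\mathcal{A}_{\mathrm S},\,\mathcal{M}_h)$, where $\mathcal{A}_{\mathrm S}:=K_x\mathcal{A}^\beta_x+K_y\mathcal{A}^\gamma_y$, and to deduce their reality and positivity from the fact that both $\mathcal{M}_h$ and $\mathcal{A}_{\mathrm S}$ are symmetric positive definite (SPD). Equivalently, $\mathcal{M}_h^{-1}\mathcal{A}_{\mathrm S}$ is a product of SPD matrices, whose spectrum I will pin down by passing to a symmetrically scaled, genuinely symmetric matrix.

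First I would verify that $\mathcal{M}_h$ is SPD: it is the $L^2(\Omega)$ Gram matrix of the nodal basis $\{\phi_l\}$, with entries $(\phi_j,\phi_i)_{L^2(\Omega)}$, so for any nonzero $v=(v_i)$ one has $v^T\mathcal{M}_h v=\|\sum_i v_i\phi_i\|^2_{L^2(\Omega)}>0$ by linear independence of the $\phi_i$. Next I would show $\mathcal{A}_{\mathrm S}$ is SPD. Symmetry is immediate from the definition of $B^n_\Omega$ following \eqref{chp-02-02}, which is invariant under the interchange $u_{h\tau}\leftrightarrow v_{h\tau}$ once the temporal weight $(t_n-t_{n-1})/2$ is factored out. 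For definiteness I would invoke the Ervin--Roop identity underlying the norm equivalence stated in the Remark after Definition 2 (cf. \cite{e-01}): for real $w\in H_0^\beta(\Omega)$, $({}_xD^\beta_L w,{}_xD^\beta_R w)_{L^2(\Omega)}=({}_xD^\beta_R w,{}_xD^\beta_L w)_{L^2(\Omega)}=\cos(\beta\pi)\|{}_xD^\beta_L w\|^2_{L^2(\Omega)}$, and analogously in $y$ with $\gamma$. Writing $w=\sum_i v_i\phi_i$, the entries of $\mathcal{A}_{\mathrm S}$ realize the spatial bilinear form of $B^n_\Omega$, so that $v^T\mathcal{A}_{\mathrm S}v$ equals $\frac{K_x}{2\cos(\beta\pi)}[({}_xD^\beta_L w,{}_xD^\beta_R w)+({}_xD^\beta_R w,{}_xD^\beta_L w)]$ plus the corresponding $y$-term; applying the identity, the factors $\cos(\beta\pi)$ and $\cos(\gamma\pi)$ cancel against the prefactors, leaving
\[
v^T\mathcal{A}_{\mathrm S}v=K_x\|{}_xD^\beta_L w\|^2_{L^2(\Omega)}+K_y\|{}_yD^\gamma_L w\|^2_{L^2(\Omega)}\ge 0,
\]
which is strictly positive for $w\neq0$ because the norm equivalence forbids both seminorms from vanishing simultaneously on a nontrivial $w\in H_0^\beta(\Omega)\cap H_0^\gamma(\Omega)$.

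Finally, since $\mathcal{M}_h$ is SPD it possesses an SPD square root $\mathcal{M}_h^{1/2}$, and
\[
\mathcal{M}_h^{-1}\mathcal{A}_{\mathrm S}=\mathcal{M}_h^{-1/2}\big(\mathcal{M}_h^{-1/2}\mathcal{A}_{\mathrm S}\mathcal{M}_h^{-1/2}\big)\mathcal{M}_h^{1/2}
\]
is similar to $\widehat{\mathcal{A}}:=\mathcal{M}_h^{-1/2}\mathcal{A}_{\mathrm S}\mathcal{M}_h^{-1/2}$. The matrix $\widehat{\mathcal{A}}$ is symmetric and, being congruent to the SPD matrix $\mathcal{A}_{\mathrm S}$, positive definite; its eigenvalues are therefore real and positive, and it is unitarily diagonalizable. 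Because similar matrices share their spectra, the $\sigma_k$ coincide with the eigenvalues of $\widehat{\mathcal{A}}$, which establishes the claim; the orthonormal eigenbasis of $\widehat{\mathcal{A}}$, rescaled by $\mathcal{M}_h^{-1/2}$, furnishes the diagonalizing $\mathcal{X}$ of \eqref{equ-03-04}. I expect the positive definiteness of $\mathcal{A}_{\mathrm S}$ to be the one delicate step: it requires combining the Ervin--Roop identity with the negative prefactor $1/\cos(\beta\pi)$ (recall $\cos(\beta\pi)<0$ for $\beta\in(1/2,1)$) so that the signs cancel into a manifestly nonnegative form, and then using the norm equivalence to exclude a nontrivial kernel.
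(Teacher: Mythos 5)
Your proof is correct and follows essentially the same route as the paper: both pass to the symmetric matrix $\mathcal{M}_h^{-1/2}(K_x\mathcal{A}^\beta_x+K_y\mathcal{A}^\gamma_y)\mathcal{M}_h^{-1/2}$, which is similar to $\mathcal{M}_h^{-1}(K_x\mathcal{A}^\beta_x+K_y\mathcal{A}^\gamma_y)$ and congruent to the stiffness matrix, and conclude from positive definiteness. The only difference is that you prove the definiteness of $K_x\mathcal{A}^\beta_x+K_y\mathcal{A}^\gamma_y$ in detail via the Ervin--Roop identity and the sign of $\cos(\beta\pi)$, whereas the paper simply cites \cite{b-02} for the positive definiteness of $B^t_\Omega(u,v)$.
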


\begin{proof}
  Note that the matrix $$\mathcal{M}_h^{-\frac{1}{2}}(K_x\mathcal{A}^\beta_x+K_y\mathcal{A}^\gamma_y)\mathcal{M}_h^{-\frac{1}{2}}$$
  is symmetric and similar to $\mathcal{M}_h^{-1}(K_x\mathcal{A}^\beta_x+K_y\mathcal{A}^\gamma_y)$,
  which, together with the positive definiteness of $B^t_\Omega(u,v)$ defined by \eqref{chp-02-07} (see reference \cite{b-02} for a proof),
  imply that the result is true.
\end{proof}

\begin{lemma}\label{lem-03-03}
  All time-grid propagators \eqref{equ-03-01} and \eqref{equ-03-02} are stable.
\end{lemma}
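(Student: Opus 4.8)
The plan is to exploit the simultaneous unitary diagonalization recorded in \eqref{equ-03-04} and thereby reduce the stability question to a scalar estimate on eigenvalues. Writing $\mathcal{S}=K_x\mathcal{A}^\beta_x+K_y\mathcal{A}^\gamma_y$ and abbreviating the half-step by $\delta_n=(t_n-t_{n-1})/2>0$, I would first factor $\mathcal{M}_h$ out of both bracketed factors in \eqref{equ-03-01} via $\mathcal{M}_h\pm\delta_n\mathcal{S}=\mathcal{M}_h(I\pm\delta_n\mathcal{M}_h^{-1}\mathcal{S})$, which cancels the leading $\mathcal{M}_h$ and produces the Cayley-type form
\[
\Psi_n=(I+\delta_n\mathcal{M}_h^{-1}\mathcal{S})^{-1}(I-\delta_n\mathcal{M}_h^{-1}\mathcal{S}),
\]
and analogously for $\Psi^\Delta_j$ with $\delta_n$ replaced by $(\tilde{t}_j-\tilde{t}_{j-1})/2$. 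This exhibits each propagator as one and the same rational function of the single matrix $\mathcal{M}_h^{-1}\mathcal{S}$, so the unitary $\mathcal{X}$ of \eqref{equ-03-04} diagonalizes it as well.

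Next I would conjugate by $\mathcal{X}$. Since $\mathcal{X}^*(\mathcal{M}_h^{-1}\mathcal{S})\mathcal{X}=\textsf{diag}(\sigma_1,\dots,\sigma_{|\mathcal{K}_h|})$, applying $\mathcal{X}^*(\cdot)\mathcal{X}$ to the displayed form yields
\[
\mathcal{X}^*\Psi_n\mathcal{X}=\textsf{diag}\!\left(\frac{1-\delta_n\sigma_1}{1+\delta_n\sigma_1},\dots,\frac{1-\delta_n\sigma_{|\mathcal{K}_h|}}{1+\delta_n\sigma_{|\mathcal{K}_h|}}\right),
\]
with the same formula for $\Psi^\Delta_j$. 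In particular every $\Psi_n$ and every $\Psi^\Delta_j$ is normal, so its spectral norm coincides with its spectral radius, both equal to $\max_k|(1-\delta_n\sigma_k)/(1+\delta_n\sigma_k)|$.

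The remaining step is the elementary scalar bound. By Lemma \ref{lem-03-02} each $\sigma_k$ is real and strictly positive, and each half-step is positive; hence $0<\delta_n\sigma_k$ gives $|1-\delta_n\sigma_k|<1+\delta_n\sigma_k$ and therefore
\[
\Big|\frac{1-\delta_n\sigma_k}{1+\delta_n\sigma_k}\Big|<1,\qquad k=1,\dots,|\mathcal{K}_h|.
\]
Taking the maximum over $k$ shows $\|\Psi_n\|_2<1$, and likewise $\|\Psi^\Delta_j\|_2<1$ for every $n$ and $j$, which is exactly the asserted stability.

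I expect no genuine obstacle once Lemma \ref{lem-03-02} is in hand: the positivity of the $\sigma_k$ is precisely what makes the Crank--Nicolson rational symbol contractive, and the unitarity of $\mathcal{X}$ is what upgrades the eigenvalue estimate to a norm estimate. The only points that deserve care are that the factorization of $\mathcal{M}_h$ is legitimate because $\mathcal{M}_h$ is invertible, and that the diagonalization is genuinely simultaneous across all $n$ and $j$, since each propagator is the same rational function of $\mathcal{M}_h^{-1}\mathcal{S}$ evaluated at its own half-step.
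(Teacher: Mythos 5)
Your proposal is correct and follows essentially the same route as the paper: conjugating by the unitary $\mathcal{X}$ of \eqref{equ-03-04} to reduce each propagator to the scalar Cayley symbol $\frac{2-(t_j-t_{j-1})\sigma_\omega}{2+(t_j-t_{j-1})\sigma_\omega}$ and then invoking the positivity of the $\sigma_\omega$ from Lemma \ref{lem-03-02}. You merely make explicit the factorization of $\mathcal{M}_h$ and the normality argument that the paper compresses into ``it is easy to verify.''
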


\begin{proof}
  For $j=1,\cdots,N$ and $k=1,\cdots,N_c$, let
\begin{eqnarray}\label{equ-03-06}
  \mathcal{X}^*\Psi_j\mathcal{X}=\textsf{diag}(\lambda_1^{(j)},\cdots,\lambda_{|\mathcal{K}_h|}^{(j)}),~~
  \mathcal{X}^*\Psi^\Delta_k\mathcal{X}=\textsf{diag}(\mu_1^{(k)},\cdots,\mu_{|\mathcal{K}_h|}^{(k)}).
\end{eqnarray}
  Then it is easy to verify that the following relations hold
\begin{eqnarray}\label{equ-03-05}
  \lambda_\omega^{(j)}=\frac{2-(t_j-t_{j-1})\sigma_\omega}{2+(t_j-t_{j-1})\sigma_\omega},~~
  \mu_\omega^{(k)}=\frac{2-(\tilde{t}_k-\tilde{t}_{k-1})\sigma_\omega}{2+(\tilde{t}_k-\tilde{t}_{k-1})\sigma_\omega}.
\end{eqnarray}
  By Lemma \ref{lem-03-02}, we conclude that $|\lambda_\omega^{(j)}|<1$ and $|\mu_\omega^{(k)}|<1$
  for all indices $j$, $k$ and eigenmodes $\omega$, and thus prove the lemma.
\end{proof}

Putting Lemmas \ref{lem-03-01}-\ref{lem-03-03} together and choosing $\Psi^\Delta_j$
as in \eqref{equ-03-02} give the following core results with respect to the Euclidean norm $\|\cdot\|_2$.

\begin{theorem}\label{thm-03-01}
  Let the error $\mathcal{E}_C=(\mathcal{E}^T_0,\mathcal{E}^T_m,\cdots,\mathcal{E}^T_N)^T$,
  time-grid propagators $\Psi_j$ and $\Psi^\Delta_k$ have eigenvalues $\lambda_\omega^{(j)}$ and $\mu_\omega^{(k)}$
  as in \eqref{equ-03-05}, respectively. Then the new error
  $\tilde{\mathcal{E}}_C=(\tilde{\mathcal{E}}^T_0,\tilde{\mathcal{E}}^T_m,\cdots,\tilde{\mathcal{E}}^T_N)^T$
  generated by the two-level version of Algorithm \ref{alg-03-01} holds
\begin{eqnarray}\label{equ-03-03}
  \|\tilde{\mathcal{E}}_C\|_2 \le \max_\omega\bigg{\{}(\lambda^\dagger_\omega)^m|(\lambda^\dagger_\omega)^m-\mu_\omega^\ddagger|
  \frac{1-(\mu_\omega^*)^{N_c-1}}{1-\mu_\omega^*}\bigg{\}} \|\mathcal{E}_C\|_2,
\end{eqnarray}
  where $\lambda^\dagger_\omega=\max\limits_j|\lambda_\omega^{(j)}|$,
  $\mu_\omega^\ddagger=\min\limits_j\mu_\omega^{(j)}$ and $\mu_\omega^*=\max\limits_j|\mu_\omega^{(j)}|$.
\end{theorem}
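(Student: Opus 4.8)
The plan is to diagonalize the entire error recursion of Lemma \ref{lem-03-01} simultaneously by the unitary matrix $\mathcal{X}$ of \eqref{equ-03-04}, which decouples the block system into $|\mathcal{K}_h|$ independent scalar recursions indexed by the eigenmode $\omega$. Setting $\hat{\mathcal{E}}_{im}=\mathcal{X}^*\mathcal{E}_{im}$ and using unitarity to preserve the Euclidean norm, so that $\|\mathcal{E}_C\|_2^2=\sum_\omega\sum_i|\hat{\mathcal{E}}^{(\omega)}_{im}|^2$, I would replace each matrix product $\mathcal{B}_{\Psi,s}^l$ and $\mathcal{B}_{\Psi^\Delta,k}^l$ by the corresponding scalar products of the eigenvalues $\lambda_\omega^{(\cdot)}$ and $\mu_\omega^{(\cdot)}$ in \eqref{equ-03-06}. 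The formula of Lemma \ref{lem-03-01} then reads, for each $\omega$,
\begin{equation*}
\hat{\tilde{\mathcal{E}}}^{(\omega)}_{km}=\sum_{i=0}^{k-2}c^{(\omega)}_{k,i}\,\hat{\mathcal{E}}^{(\omega)}_{im},\qquad c^{(\omega)}_{k,i}=\Big(\prod_{r=i+3}^{k}\mu_\omega^{(r)}\Big)\Big(\prod_{r=(i+1)m+1}^{(i+2)m}\lambda_\omega^{(r)}-\mu_\omega^{(i+2)}\Big)\Big(\prod_{r=im+1}^{(i+1)m}\lambda_\omega^{(r)}\Big).
\end{equation*}

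Next I would bound the three factors of $c^{(\omega)}_{k,i}$ separately. The outer $\mu$-product has $k-2-i$ factors and is controlled by $(\mu_\omega^*)^{k-2-i}$, while the rightmost $\lambda$-product of $m$ factors is controlled by $(\lambda_\omega^\dagger)^m$; both follow at once from the definitions of $\mu_\omega^*$ and $\lambda_\omega^\dagger$. For the middle difference I would establish the two-sided estimate
\begin{equation*}
\mu_\omega^\ddagger\le\mu_\omega^{(i+2)}\le\prod_{r=(i+1)m+1}^{(i+2)m}\lambda_\omega^{(r)}\le(\lambda_\omega^\dagger)^m,
\end{equation*}
whence $0\le\prod_r\lambda_\omega^{(r)}-\mu_\omega^{(i+2)}\le(\lambda_\omega^\dagger)^m-\mu_\omega^\ddagger=|(\lambda_\omega^\dagger)^m-\mu_\omega^\ddagger|$. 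Combining the three bounds gives the pointwise estimate $|c^{(\omega)}_{k,i}|\le(\lambda_\omega^\dagger)^m|(\lambda_\omega^\dagger)^m-\mu_\omega^\ddagger|(\mu_\omega^*)^{k-2-i}$.

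With the coefficient bound in hand, the scalar recursion becomes a finite discrete convolution with the non-negative kernel $(\mu_\omega^*)^{k-2-i}$. Applying the discrete Young inequality (or, equivalently, estimating the lower-triangular Toeplitz operator by $\sqrt{\|\cdot\|_1\|\cdot\|_\infty}$), I would obtain $\big(\sum_k|\hat{\tilde{\mathcal{E}}}^{(\omega)}_{km}|^2\big)^{1/2}\le R_\omega\big(\sum_i|\hat{\mathcal{E}}^{(\omega)}_{im}|^2\big)^{1/2}$, where $R_\omega$ equals the bracketed quantity in \eqref{equ-03-03}; the kernel $\ell^1$-norm produces exactly the geometric sum $\frac{1-(\mu_\omega^*)^{N_c-1}}{1-\mu_\omega^*}$, because the shift $k-2-i$ never exceeds $N_c-2$. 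Squaring, summing over $\omega$, bounding $R_\omega$ by its maximum, and re-assembling by unitarity then yields \eqref{equ-03-03}.

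I expect the single delicate point to be the left inequality $\mu_\omega^{(i+2)}\le\prod_r\lambda_\omega^{(r)}$ of the middle estimate. Writing $x_r=(t_r-t_{r-1})\sigma_\omega>0$ and $\phi(s)=(2-s)/(2+s)$, and noting that the coarse step $\tilde{t}_{i+2}-\tilde{t}_{i+1}$ is the sum of its $m$ constituent fine steps, this is precisely the super-multiplicativity $\prod_r\phi(x_r)\ge\phi\big(\sum_r x_r\big)$ of the Crank--Nicolson stability function under addition of positive arguments. The case $m=2$ reduces, after clearing the positive denominators, to the manifestly non-negative remainder $2ab(a+b)$; the general case follows by induction, but the argument must be handled with care, since individual factors $\phi(x_r)$ may be negative when a fine step is large, so the naive monotone induction step fails and the sign bookkeeping has to be carried out explicitly. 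Lemmas \ref{lem-03-02} and \ref{lem-03-03}, which guarantee $\sigma_\omega>0$ together with $|\lambda_\omega^{(\cdot)}|,|\mu_\omega^{(\cdot)}|<1$, ensure all quantities are well defined and that $\mu_\omega^\ddagger\le(\lambda_\omega^\dagger)^m$, so the absolute value in \eqref{equ-03-03} may legitimately be dropped.
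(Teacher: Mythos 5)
Your proposal is correct and takes essentially the same route as the paper's proof: diagonalize the Lemma \ref{lem-03-01} recursion by the unitary $\mathcal{X}$, reduce to the per-mode lower-triangular matrix $\mathcal{J}_\omega$, bound $\|\mathcal{J}_\omega\|_2$ by $\sqrt{\|\mathcal{J}_\omega\|_1\|\mathcal{J}_\omega\|_\infty}$ with the geometric sum in $\mu_\omega^*$, and reassemble by orthonormality. The only difference is that you spell out the super-multiplicativity estimate $\mu_\omega^{(i+2)}\le\prod_r\lambda_\omega^{(r)}\le(\lambda_\omega^\dagger)^m$ (including the sign bookkeeping when some $\lambda_\omega^{(r)}<0$) that justifies bounding the middle factor by $|(\lambda_\omega^\dagger)^m-\mu_\omega^\ddagger|$, a step the paper asserts without detail.
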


\begin{proof}
  Since the set of vectors $\{\mathcal{X}_\omega\}_{\omega=1}^{|\mathcal{K}_h|}$ is orthonormal,
  we can express $\mathcal{E}_{km}$ and $\tilde{\mathcal{E}}_{km}$ by eigenvector expansions
\begin{eqnarray}\label{equ-03-07}
  \mathcal{E}_{km}=\sum_{\omega=1}^{|\mathcal{K}_h|}e_\omega^{(km)}\mathcal{X}_\omega~\mbox{with}~
  e_\omega^{(km)}=\mathcal{X}^*_\omega\mathcal{E}_{km},~~
  \tilde{\mathcal{E}}_{km}=\sum_{\omega=1}^{|\mathcal{K}_h|}\tilde{e}_\omega^{(km)}\mathcal{X}_\omega~
  \mbox{with}~\tilde{e}_\omega^{(km)}=\mathcal{X}^*_\omega\tilde{\mathcal{E}}_{km}.
\end{eqnarray}
  According to Lemma \ref{lem-03-01} and \eqref{equ-03-06} in the proof of Lemma \ref{lem-03-03},
  $\tilde{e}_\omega^{(km)}$ can be reformulated as follows
\begin{eqnarray*}
  \tilde{e}_\omega^{(0)}=\tilde{e}_\omega^{(m)}=0,~~
  \tilde{e}_\omega^{(km)}=\sum_{i=0}^{k-2}\mathcal{Q}_i^k
  (\mathcal{S}_{i+1}-\mu_\omega^{(i+2)})\mathcal{S}_ie_\omega^{(im)},~k=2,\cdots,N_c,
\end{eqnarray*}
  alternatively, in matrix representation
\begin{eqnarray}\label{equ-03-08}
  \tilde{e}_\omega:=\left(
   \begin{array}{c}
    \tilde{e}_\omega^{(0)}  \\
    \tilde{e}_\omega^{(m)}  \\
          \vdots            \\
    \tilde{e}_\omega^{(N)}  \\
   \end{array}
  \right)=\mathcal{J}_\omega\left(
   \begin{array}{c}
    e_\omega^{(0)}  \\
    e_\omega^{(m)}  \\
          \vdots    \\
    e_\omega^{(N)}  \\
   \end{array}
  \right):=\mathcal{J}_\omega e_\omega
\end{eqnarray}
  with $\mathcal{J}_\omega$'s $(k,i)$-th entry: $\mathcal{J}^\omega_{k,i}=\mathcal{Q}_i^k
  (\mathcal{S}_{i+1}-\mu_\omega^{(i+2)})\mathcal{S}_i$ if $i\le k-2$, and $\mathcal{J}^\omega_{k,i}=0$ otherwise,
  where
\begin{eqnarray*}
  \mathcal{Q}_{k-2}^k=1,~~\mathcal{Q}_i^k=\prod_{j=3+i}^k\mu_\omega^{(j)},~i=0,1,\cdots,k-3;~~~
  \mathcal{S}_i=\prod_{j=im+1}^{(i+1)m}\lambda_\omega^{(j)}.
\end{eqnarray*}
  Utilize H\"{o}lder's inequality $\|\mathcal{J}_\omega\|_2\le\sqrt{\|\mathcal{J}_\omega\|_1\|\mathcal{J}_\omega\|_\infty}$
  to obtain an estimate
\begin{eqnarray*}
  \|\mathcal{J}_\omega\|_2\le(\lambda^\dagger_\omega)^m|(\lambda^\dagger_\omega)^m-\mu_\omega^\ddagger|
  \frac{1-(\mu_\omega^*)^{N_c-1}}{1-\mu_\omega^*}.
\end{eqnarray*}
  Combining the above inequality, Eq. \eqref{equ-03-07} and Eq. \eqref{equ-03-08}, it can be seen that
\begin{eqnarray*}
  \|\tilde{\mathcal{E}}_C\|_2^2&=&\sum_{k=0}^{N_c}\|\tilde{\mathcal{E}}_k\|^2=
  \sum_{k=0}^{N_c}\sum_{\omega=1}^{|\mathcal{K}_h|}|\tilde{e}_\omega^{(km)}|^2=
  \sum_{\omega=1}^{|\mathcal{K}_h|}\|\mathcal{J}_\omega e_\omega\|_2^2\\
  &\le&\max_\omega\|\mathcal{J}_\omega\|_2^2\sum_{\omega=1}^{|\mathcal{K}_h|}\|e_\omega\|_2^2
  =(\max_\omega\|\mathcal{J}_\omega\|_2)^2\|\mathcal{E}_C\|_2^2,
\end{eqnarray*}
  which leads to the inequality \eqref{equ-03-03}.
\end{proof}

\begin{remark}
  It is straightforward to verify that Lemma \ref{lem-03-01} and Theorem \ref{thm-03-01}
  are generalizations of theoretical results described in \cite{d-02}.
\end{remark}

\begin{theorem}\label{thm-03-02}
  The inequality \eqref{equ-03-03} provides an upper bound on the ratio of two successive fine-grid residual norms, i.e.,
\begin{eqnarray}\label{equ-03-11}
  \frac{\|r_{l+1}\|_2}{\|r_l\|_2}\le
  \max_\omega\bigg{\{}(\lambda^\dagger_\omega)^m|(\lambda^\dagger_\omega)^m-\mu_\omega^\ddagger|
  \frac{1-(\mu_\omega^*)^{N_c-1}}{1-\mu_\omega^*}\bigg{\}},
\end{eqnarray}
  where $r_l$ is the residual vector achieved at the $l$-th step of the two-level version of Algorithm \ref{alg-03-01}.
\end{theorem}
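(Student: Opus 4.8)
The plan is to reduce the residual statement to the error statement of Theorem~\ref{thm-03-01} by exploiting the identity $r_l=\mathcal{G}-\mathcal{A}\mathcal{U}^{(l)}=\mathcal{A}\mathcal{E}^{(l)}$, where $\mathcal{E}^{(l)}$ is the error carried into the $l$-th iteration and $\mathcal{A}$ is the block unit lower bidiagonal operator of \eqref{ode-03}. First I would record that the residual is measured right after FCF-relaxation, and that the F-relaxation forces the residual to vanish at every F-point, because setting $\mathcal{V}_n=\Psi_n\mathcal{V}_{n-1}+\mathcal{G}_n$ annihilates the $n$-th block of $\mathcal{G}-\mathcal{A}\mathcal{V}$. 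Hence $\|r_l\|_2$ equals the Euclidean norm of the C-point subvector of $r_l$, whose blocks are precisely the restricted residual $\mathcal{G}^{(2)}$ exhibited in the proof of Lemma~\ref{lem-03-01}; it therefore suffices to track the C-point residual.

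Next I would diagonalize. Using the unitary $\mathcal{X}$ of \eqref{equ-03-04} together with Lemmas~\ref{lem-03-02}--\ref{lem-03-03}, every $\Psi_j$ and $\Psi^\Delta_k$ acts in the $\omega$-th eigenmode as the scalar $\lambda_\omega^{(j)}$, $\mu_\omega^{(k)}$ of \eqref{equ-03-05}. Expanding $r_l$ in the orthonormal basis $\{\mathcal{X}_\omega\}$, the mode-$\omega$ C-point residual is obtained from the mode-$\omega$ C-point error by the lower-bidiagonal \emph{ideal coarse operator} $\mathcal{A}_\omega$ (unit diagonal, subdiagonal entries $-\mathcal{S}_{k-1}$ with $\mathcal{S}_i=\prod_{j=im+1}^{(i+1)m}\lambda_\omega^{(j)}$). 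Since the C-point error advances by the matrix $\mathcal{J}_\omega$ of \eqref{equ-03-08}, the residual advances by the \emph{similar} matrix $\mathcal{A}_\omega\mathcal{J}_\omega\mathcal{A}_\omega^{-1}$, so that, summing over modes and invoking orthonormality of $\mathcal{X}$ exactly as in the closing display of Theorem~\ref{thm-03-01}, one gets $\|r_{l+1}\|_2\le\max_\omega\|\mathcal{A}_\omega\mathcal{J}_\omega\mathcal{A}_\omega^{-1}\|_2\,\|r_l\|_2$.

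Finally I would bound $\|\mathcal{A}_\omega\mathcal{J}_\omega\mathcal{A}_\omega^{-1}\|_2$ by the very quantity that already bounds $\|\mathcal{J}_\omega\|_2$. The residual propagator is strictly lower triangular with entries again built from the three elementary factors $\lambda_\omega^{(j)}$, the differences $\mathcal{S}_i-\mu_\omega^{(\cdot)}$, and the $\mu_\omega^{(j)}$; applying the same Hölder estimate $\|\cdot\|_2\le\sqrt{\|\cdot\|_1\|\cdot\|_\infty}$ and using $|\lambda_\omega^{(j)}|\le\lambda^\dagger_\omega$, $\mu_\omega^{(j)}\ge\mu^\ddagger_\omega$ and $|\mu_\omega^{(j)}|\le\mu^*_\omega<1$ reproduces the geometric sum $\frac{1-(\mu^*_\omega)^{N_c-1}}{1-\mu^*_\omega}$ and hence the identical factor on the right of \eqref{equ-03-03}; taking the maximum over $\omega$ yields \eqref{equ-03-11}. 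The main obstacle is exactly this last step: because the residual propagator is only similar to, not equal to, the error propagator $\mathcal{J}_\omega$, the bound is not inherited for free from Theorem~\ref{thm-03-01}, and one must re-run the row- and column-sum estimates for the conjugated matrix, verifying that the conjugation by the possibly ill-conditioned bidiagonal $\mathcal{A}_\omega$ does not inflate the constant beyond $(\lambda^\dagger_\omega)^m|(\lambda^\dagger_\omega)^m-\mu^\ddagger_\omega|\frac{1-(\mu^*_\omega)^{N_c-1}}{1-\mu^*_\omega}$.
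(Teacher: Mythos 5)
Your reduction coincides with the paper's: after the closing F-relaxation the residual is supported on the C-points, and on the C-points it equals the Schur complement $\mathcal{A}_\Delta=\mathcal{A}_{CC}-\mathcal{A}_{CF}\mathcal{A}_{FF}^{-1}\mathcal{A}_{FC}$ (your mode-wise $\mathcal{A}_\omega$, unit lower bidiagonal with subdiagonal entries $-\mathcal{S}_{k-1}$) applied to the C-point error; hence the residual propagates by $\mathcal{A}_\omega\mathcal{J}_\omega\mathcal{A}_\omega^{-1}$ and $\|r_{l+1}\|_2/\|r_l\|_2\le\max_\omega\|\mathcal{A}_\omega\mathcal{J}_\omega\mathcal{A}_\omega^{-1}\|_2$. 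The paper reaches the same display via the ideal-interpolation identity $r_l=\mathcal{A}\mathcal{P}^{(0)}\tilde{\mathcal{E}}_C^{(l)}=\mathcal{A}_\Delta\tilde{\mathcal{E}}_C^{(l)}$, which is your ``residual vanishes at F-points'' observation in different clothing. Up to this point the two arguments are the same.

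The gap is the final, decisive step, which you explicitly defer rather than execute: you assert that one ``must re-run the row- and column-sum estimates for the conjugated matrix,'' but you neither do so nor explain why the entries of $\mathcal{A}_\omega\mathcal{J}_\omega\mathcal{A}_\omega^{-1}$ --- which involve the full lower-triangular inverse with $(k,i)$ entry $\prod_{j=i}^{k-1}\mathcal{S}_j$ --- should satisfy the same $\ell_1$/$\ell_\infty$ bounds as the entries of $\mathcal{J}_\omega$. A proposal that ends with ``verify that the conjugation does not inflate the constant'' has identified, not closed, the remaining step. The paper closes it in one line by asserting that $\mathcal{A}_\Delta$ and $\mathcal{J}_\omega$ commute, so that $\|\mathcal{A}_\Delta\mathcal{J}_\omega\mathcal{A}_\Delta^{-1}\|_2=\|\mathcal{J}_\omega\|_2$ and the H\"{o}lder estimate of Theorem \ref{thm-03-01} applies verbatim; that is the idea missing from your write-up. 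Your suspicion that the bound is ``not inherited for free'' is in fact well founded: commutation is immediate on a uniform time grid, where both matrices are lower-triangular Toeplitz (polynomials in the same shift), but on a genuinely nonuniform grid a direct computation of the $(3,0)$ entries of $\mathcal{A}_\Delta\mathcal{J}_\omega$ and $\mathcal{J}_\omega\mathcal{A}_\Delta$ on four coarse points shows they differ unless $\mu_\omega^{(3)}\mathcal{S}_1=\mu_\omega^{(2)}\mathcal{S}_2$. So in the general setting claimed by the theorem, the direct estimate you gesture at is exactly what would be needed --- and it is absent.
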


\begin{proof}
  Rewriting $\mathcal{A}$ in Eq. \eqref{ode-03} and $\mathcal{P}^{(0)}$ in block form regarding the given C/F splitting
\begin{eqnarray*}
\mathcal{A}=\begin{bmatrix}
\mathcal{A}_{FF} & \mathcal{A}_{FC} \\
\mathcal{A}_{CF} & \mathcal{A}_{CC}
\end{bmatrix},~~
\mathcal{P}^{(0)}=\begin{bmatrix}
-\mathcal{A}_{FF}^{-1}\mathcal{A}_{FC} \\
I_C
\end{bmatrix},
\end{eqnarray*}
  and letting $\tilde{\mathcal{E}}_l=((\tilde{\mathcal{E}}_F^{(l)})^T,(\tilde{\mathcal{E}}_C^{(l)})^T)^T$
  be the fine-grid error obtained after $l$ steps of the two-level version of Algorithm \ref{alg-03-01},
  yields the corresponding fine-grid residual norm
\begin{eqnarray}\label{equ-03-10}
  \|r_l\|_2=\|\mathcal{A}\tilde{\mathcal{E}}_l\|_2=
  \|\mathcal{A}\mathcal{P}^{(0)}\tilde{\mathcal{E}}_C^{(l)}\|_2=
  \|\mathcal{A}_\Delta\tilde{\mathcal{E}}_C^{(l)}\|_2,
\end{eqnarray}
  where the second equality uses the fact that the coarse-grid correction at F-points
  is equivalent to an F-relaxation over $\mathcal{A}$, and 
\begin{eqnarray*}
\mathcal{A}_\Delta=\begin{bmatrix}
  I &        &        &   \\
  -\mathcal{B}_{\Psi,m}^m & I &        &   \\
        & \ddots & \ddots &   \\
        &        &  -\mathcal{B}_{\Psi,N}^m & I
\end{bmatrix}
=\mathcal{A}_{CC}-\mathcal{A}_{CF}\mathcal{A}_{FF}^{-1}\mathcal{A}_{FC}.
\end{eqnarray*}
  Then, we have the following relation from Eq. \eqref{equ-03-10} and the proof of Theorem \ref{thm-03-01}
\begin{eqnarray*}
  \frac{\|r_{l+1}\|_2}{\|r_l\|_2}&=&\frac{\|\mathcal{A}_\Delta\tilde{\mathcal{E}}_C^{(l+1)}\|_2}{\|\mathcal{A}_\Delta
  \tilde{\mathcal{E}}_C^{(l)}\|_2}\\ &\le& \max_\omega \|\mathcal{A}_\Delta \mathcal{J}_\omega \mathcal{A}_\Delta^{-1}\|_2\\&=&
  \max_\omega\bigg{\{}(\lambda^\dagger_\omega)^m|(\lambda^\dagger_\omega)^m-\mu_\omega^\ddagger|
  \frac{1-(\mu_\omega^*)^{N_c-1}}{1-\mu_\omega^*}\bigg{\}},
\end{eqnarray*}
  where the last equality stems from the fact that $\mathcal{A}_\Delta$ and $\mathcal{J}_\omega$ commute.
  This completes the proof.
\end{proof}

\section{Test problems and numerical results}
\label{sec4}

Within the section, we illustrate the convergence behavior of our space-time FE numerical scheme \eqref{chp-02-03},
present theoretical confirmations and examine the performance evaluation of Algorithm \ref{alg-03-01}
in weak and strong parallel scaling studies, where includes the accuracy and generality of our analysis. 
Numerical experiments are performed on a 64-bit Linux cluster consisting of 32 compute nodes,
with sixteen 2.6 gigahertz Intel Xeon cores and 20 megabytes cache per compute node.
In tables below, columns labeled $\|error\|_0$ show error norms $\|u(x,y,T)-u_{h\tau}(x,y,T)\|_{L^2(\Omega)}$,
\emph{conv. rate} denote the convergence rates, \emph{T.U.B.} represent the theoretical upper bound governed by the inequality \eqref{equ-03-11}
and \emph{np} are the number of processors.

\subsection{Convergence behavior test}

\begin{example}\label{cex-04-01}
  Consider problem \eqref{chp-01-01}-\eqref{chp-01-03} with $T=1.0$,
  $\Omega=(0,1)\times(0,1)$, the initial data $\psi_0(x,y)=10(x-x^2)^2(y-y^2)^2$ and the source term
\begin{align*}
  & f(x,y,t)=-10e^{-t}(x-x^2)^2(y-y^2)^2\\
  &\quad+\frac{10K_xe^{-t}(y-y^2)^2}{\cos(\beta\pi)}
  \Big{[}\frac{x^{2-2\beta}+(1-x)^{2-2\beta}}{\Gamma(3-2\beta)}
  -6\frac{x^{3-2\beta}+(1-x)^{3-2\beta}}{\Gamma(4-2\beta)}+12\frac{x^{4-2\beta}+(1-x)^{4-2\beta}}{\Gamma(5-2\beta)}\Big{]}\\
  &\quad+\frac{10K_ye^{-t}(x-x^2)^2}{\cos(\gamma\pi)}\Big{[}\frac{y^{2-2\gamma}+(1-y)^{2-2\gamma}}{\Gamma(3-2\gamma)}
  -6\frac{y^{3-2\gamma}+(1-y)^{3-2\gamma}}{\Gamma(4-2\gamma)}+12\frac{y^{4-2\gamma}+(1-y)^{4-2\gamma}}{\Gamma(5-2\gamma)}\Big{]}.
\end{align*}
  Its exact solution is given by $u(x,y,t)=10e^{-t}(x-x^2)^2(y-y^2)^2$.
\end{example}

We start by the case of uniform temporal partitions. Tables \ref{ctp-04-01}-\ref{ctp-04-02}
are provided to address the $\mathcal{O}(h^2+\tau^2)$ error bound of space-time FE approximations
with different choices of $\beta$, $\gamma$ for two specific cases: $\tau=h$ and $\tau=h^3$,
where $\tau$ and $h$ respectively denote the uniform time and space step sizes. Fig. \ref{fig-04-01} gives the surface plots of
exact and numerical solutions in the case where $h=\tau=1/32$, $\beta=0.95$, $\gamma=0.65$, $K_x=2.0$ and $K_y=0.5$,
where a good agreement can be exploited in these two solutions.

\begin{table}[htbp]
\small
\centering\caption{Convergence behaviors with different choices of $\beta$, $\gamma$ for $\tau=h$, $K_x=2.0$ and $K_y=0.5$.}\label{ctp-04-01}\vskip 0.1cm
\begin{tabular}{||c|c|c|c|c|c|c|c|c|c|c|c|c|c|c|c|c|c|c|}\hline
\multirow{2}{*}{$M_\beta=M_\gamma$}&\multirow{2}{*}{$N$}&\multicolumn{2}{c|}{$\beta=0.6$, $\gamma=0.7$}
&\multicolumn{2}{c|}{$\beta=0.95$, $\gamma=0.65$}&\multicolumn{2}{c||}{$\beta=\gamma=0.8$} \\ \cline{3-8}
~&~& $\|error\|_0$ & \emph{conv. rate} & $\|error\|_0$ & \emph{conv. rate} & $\|error\|_0$ & \multicolumn{1}{c||}{\emph{conv. rate}} \\ \hline

  4 &  4 & 9.1102E-4 &   -   & 1.0732E-3 &   -   & 9.7198E-4 & \multicolumn{1}{c||}{  -  } \\ \hline
  8 &  8 & 2.1317E-4 & 2.095 & 3.1841E-4 & 1.753 & 2.4527E-4 & \multicolumn{1}{c||}{1.987} \\ \hline
 16 & 16 & 4.8931E-5 & 2.123 & 7.9071E-5 & 2.010 & 5.6258E-5 & \multicolumn{1}{c||}{2.124} \\ \hline
 32 & 32 & 1.1402E-5 & 2.101 & 1.9200E-5 & 2.042 & 1.3269E-5 & \multicolumn{1}{c||}{2.084} \\ \hline

\end{tabular}
\end{table}

\begin{table}[htbp]
\small
\centering\caption{Convergence behaviors with different choices of $\beta$, $\gamma$ for $\tau=h^3$, $K_x=3.0$ and $K_y=7.5$.}\label{ctp-04-02}\vskip 0.1cm
\begin{tabular}{||c|c|c|c|c|c|c|c|c|c|c|c|c|c|c|c|c|c|c|}\hline
\multirow{2}{*}{$M_\beta=M_\gamma$}&\multirow{2}{*}{$N$}&\multicolumn{2}{c|}{$\beta=0.6$, $\gamma=0.7$}
&\multicolumn{2}{c|}{$\beta=0.95$, $\gamma=0.65$}&\multicolumn{2}{c||}{$\beta=\gamma=0.8$} \\ \cline{3-8}
~&~& $\|error\|_0$ & \emph{conv. rate} & $\|error\|_0$ & \emph{conv. rate} & $\|error\|_0$ & \multicolumn{1}{c||}{\emph{conv. rate}} \\ \hline

  4 &     64 & 9.0446E-4 &   -   & 9.9374E-4 &   -   & 9.5859E-4 & \multicolumn{1}{c||}{  -  } \\ \hline
  8 &    512 & 2.1566E-4 & 2.068 & 2.5964E-4 & 1.936 & 2.3889E-4 & \multicolumn{1}{c||}{2.005} \\ \hline
 16 &   4096 & 4.9575E-5 & 2.121 & 6.2424E-5 & 2.056 & 5.5742E-5 & \multicolumn{1}{c||}{2.100} \\ \hline
 32 &  32768 & 1.1607E-5 & 2.095 & 1.5211E-5 & 2.037 & 1.3240E-5 & \multicolumn{1}{c||}{2.074} \\ \hline

\end{tabular}
\end{table}

\begin{figure}[htp]
\begin{minipage}[t]{0.5\linewidth}
 \centering
 \subfigure[The exact solution]{%
 \label{fig-04-01-a}
 \includegraphics[width=3in]{./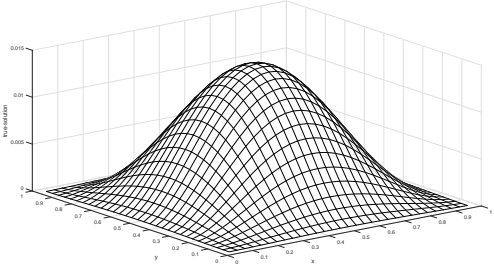}}
\end{minipage}%
\begin{minipage}[t]{0.5\linewidth}
 \centering
 \subfigure[The numerical solution]{%
 \label{fig-04-01-b}
 \includegraphics[width=3in]{./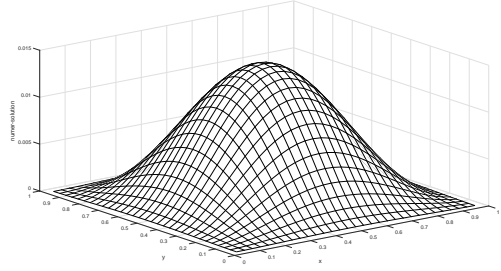}}
\end{minipage}
\caption{Accuracy test for Example \ref{cex-04-01} at $t=1$, $h=\tau=1/32$, $\beta=0.95$, $\gamma=0.65$, $K_x=2.0$, $K_y=0.5$.}\label{fig-04-01}
\end{figure}

For the nonuniform temporal case, we only consider the piecewise uniform partition:
subinterval uniform partition with $N/4$ knots on $[0,\sigma]$ and $[1-\sigma,1]$, while $N/2$ knots on $[\sigma,1-\sigma]$,
where $\sigma$ is the transition point defined by $\sigma=2\varepsilon\ln N$ \cite{s-03}.
Tables \ref{ctp-04-03}-\ref{ctp-04-04} enumerate error norms and convergence rates for $N=1/h$ and $N=1/h^3$.
It can easily be seen that the saturation error order carries over.

\begin{table}[htbp]
\small
\centering\caption{Convergence behaviors with different choices of
$\beta$, $\gamma$ for $N=1/h$, $\varepsilon=2^{-6}$, $K_x=3.0$ and $K_y=7.5$.}\label{ctp-04-03}\vskip 0.1cm
\begin{tabular}{||c|c|c|c|c|c|c|c|c|c|c|c|c|c|c|c|c|c|c|}\hline
\multirow{2}{*}{$M_\beta=M_\gamma$}&\multirow{2}{*}{$N$}&\multicolumn{2}{c|}{$\beta=0.6$, $\gamma=0.7$}
&\multicolumn{2}{c|}{$\beta=0.95$, $\gamma=0.65$}&\multicolumn{2}{c||}{$\beta=\gamma=0.8$} \\ \cline{3-8}
~&~& $\|error\|_0$ & \emph{conv. rate} & $\|error\|_0$ & \emph{conv. rate} & $\|error\|_0$ & \multicolumn{1}{c||}{\emph{conv. rate}} \\ \hline

  4 &  4 & 9.3174E-4 &   -   & 9.8898E-4 &   -   & 9.7527E-4 & \multicolumn{1}{c||}{  -  } \\ \hline
  8 &  8 & 2.2117E-4 & 2.052 & 2.6382E-4 & 1.936 & 2.5103E-4 & \multicolumn{1}{c||}{1.971} \\ \hline
 16 & 16 & 4.9944E-5 & 2.104 & 6.3043E-5 & 2.046 & 5.7106E-5 & \multicolumn{1}{c||}{2.097} \\ \hline
 32 & 32 & 1.1615E-5 & 2.074 & 1.5298E-5 & 2.030 & 1.3368E-5 & \multicolumn{1}{c||}{2.067} \\ \hline

\end{tabular}
\end{table}

\begin{table}[htbp]
\small
\centering\caption{Convergence behaviors with different choices of
$\beta$, $\gamma$ for $N=1/h^3$, $\varepsilon=2^{-8}$, $K_x=2.0$ and $K_y=0.5$.}\label{ctp-04-04}\vskip 0.1cm
\begin{tabular}{||c|c|c|c|c|c|c|c|c|c|c|c|c|c|c|c|c|c|c|}\hline
\multirow{2}{*}{$M_\beta=M_\gamma$}&\multirow{2}{*}{$N$}&\multicolumn{2}{c|}{$\beta=0.6$, $\gamma=0.7$}
&\multicolumn{2}{c|}{$\beta=0.95$, $\gamma=0.65$}&\multicolumn{2}{c||}{$\beta=\gamma=0.8$} \\ \cline{3-8}
~&~& $\|error\|_0$ & \emph{conv. rate} & $\|error\|_0$ & \emph{conv. rate} & $\|error\|_0$ & \multicolumn{1}{c||}{\emph{conv. rate}} \\\hline

  4 &    64 & 8.9572E-4 &   -   & 1.1194E-3 &   -   & 9.6413E-4 & \multicolumn{1}{c||}{  -  } \\ \hline
  8 &   512 & 2.1320E-4 & 2.050 & 3.1590E-4 & 1.882 & 2.3996E-4 & \multicolumn{1}{c||}{2.004} \\ \hline
 16 &  4096 & 4.9216E-5 & 2.081 & 7.7853E-5 & 2.014 & 5.5905E-5 & \multicolumn{1}{c||}{2.072} \\ \hline
 32 & 32768 & 1.1480E-5 & 2.071 & 1.8941E-5 & 2.027 & 1.3271E-5 & \multicolumn{1}{c||}{2.052} \\ \hline

\end{tabular}
\end{table}

\subsection{Comparisons on convergence factors of the two-level MGRIT}

\begin{example}\label{cex-04-02}
  We use the same Example \ref{cex-04-01}. 
\end{example}

The objective of this example is to measure estimations from Theorems \ref{thm-03-01}-\ref{thm-03-02}
and asymptotic convergence rates over the final five MGRIT iterations. We can observe from
Tables \ref{ctp-04-05}-\ref{ctp-04-06} with different $\beta$, $\gamma$, $m$ for $N=1/h^2$, $N=1/h^3$
that, in all cases, the observed results are very close to our theoretical estimates,
indicating that Theorems \ref{thm-03-01}-\ref{thm-03-02} offers good bounds for two-dimensional SFDEs on these two temporal meshes.

\begin{table}[htbp]
\small
\centering\caption{Asymptotic convergence factors of uniform temporal meshes
for $K_x=2.0$, $K_y=0.5$.}\label{ctp-04-05}\vskip 0.1cm
\begin{tabular}{||c|c|c|c|c|c|c|c|c|c|c|c|c|c|c|c|c|c|c|}\hline
\multirow{2}{*}{$M_\beta=M_\gamma$}&\multirow{2}{*}{$N$}&\multicolumn{4}{c|}{$\beta=0.6$, $\gamma=0.7$}
&\multicolumn{4}{c||}{$\beta=0.95$, $\gamma=0.65$} \\ \cline{3-10}
~&~& $m=2$ & \emph{T.U.B.} & $m=16$ & \emph{T.U.B.} & $m=2$ & \emph{T.U.B.} & $m=16$ & \multicolumn{1}{c||}{\emph{T.U.B.}} \\ \hline \hline

 16 &  256 & 0.0094 & \multirow{3}{*}{0.0122} & 0.0081 & \multirow{3}{*}{0.0118}
           & 0.0078 & \multirow{3}{*}{0.0115} & 0.0098 & \multicolumn{1}{c||}{\multirow{3}{*}{0.0135}} \\ \cline{1-3} \cline{5-5} \cline{7-7} \cline{9-9}
 32 & 1024 & 0.0103 &  & 0.0096 &  & 0.0091 &  & 0.0110 & \multicolumn{1}{c||}{} \\ \cline{1-3} \cline{5-5} \cline{7-7} \cline{9-9}
 64 & 4096 & 0.0111 &  & 0.0105 &  & 0.0101 &  & 0.0122 & \multicolumn{1}{c||}{} \\ \hline \hline

  8 &   512 & 0.0134 & \multirow{3}{*}{0.0153} & 0.0163 & \multirow{3}{*}{0.0183}
            & 0.0137 & \multirow{3}{*}{0.0156} & 0.0188 & \multicolumn{1}{c||}{\multirow{3}{*}{0.0206}} \\ \cline{1-3} \cline{5-5} \cline{7-7} \cline{9-9}
 16 &  4096 & 0.0140 &  & 0.0169 &  & 0.0141 &  & 0.0191 & \multicolumn{1}{c||}{} \\ \cline{1-3} \cline{5-5} \cline{7-7} \cline{9-9}
 32 & 32768 & 0.0142 &  & 0.0174 &  & 0.0145 &  & 0.0195 & \multicolumn{1}{c||}{} \\ \hline

\end{tabular}
\end{table}

\begin{table}[htbp]
\small
\centering\caption{Asymptotic convergence factors of piecewise uniform temporal meshes
for $\varepsilon=2^{-6}$, $K_x=3.0$, $K_y=7.5$.}\label{ctp-04-06}\vskip 0.1cm
\begin{tabular}{||c|c|c|c|c|c|c|c|c|c|c|c|c|c|c|c|c|c|c|}\hline
\multirow{2}{*}{$M_\beta=M_\gamma$}&\multirow{2}{*}{$N$}&\multicolumn{4}{c|}{$\beta=0.6$, $\gamma=0.7$}
&\multicolumn{4}{c||}{$\beta=0.95$, $\gamma=0.65$} \\ \cline{3-10}
~&~& $m=2$ & \emph{T.U.B.} & $m=4$ & \emph{T.U.B.} & $m=4$ & \emph{T.U.B.} & $m=8$ & \multicolumn{1}{c||}{\emph{T.U.B.}} \\ \hline \hline

 16 &  256 & 0.0179 & \multirow{3}{*}{0.0285} & 0.0117 & \multirow{3}{*}{0.0168}
           & 0.0163 & \multirow{3}{*}{0.0218} & 0.0130 & \multicolumn{1}{c||}{\multirow{3}{*}{0.0182}} \\ \cline{1-3} \cline{5-5} \cline{7-7} \cline{9-9}
 32 & 1024 & 0.0213 &  & 0.0139 &  & 0.0192 &  & 0.0157 & \multicolumn{1}{c||}{} \\ \cline{1-3} \cline{5-5} \cline{7-7} \cline{9-9}
 64 & 4096 & 0.0236 &  & 0.0155 &  & 0.0207 &  & 0.0172 & \multicolumn{1}{c||}{} \\ \hline \hline

  8 &   512 & 0.0318 & \multirow{3}{*}{0.0554} & 0.0277 & \multirow{3}{*}{0.0450}
            & 0.0273 & \multirow{3}{*}{0.0329} & 0.0161 & \multicolumn{1}{c||}{\multirow{3}{*}{0.0214}} \\ \cline{1-3} \cline{5-5} \cline{7-7} \cline{9-9}
 16 &  4096 & 0.0370 &  & 0.0329 &  & 0.0291 &  & 0.0184 & \multicolumn{1}{c||}{} \\ \cline{1-3} \cline{5-5} \cline{7-7} \cline{9-9}
 32 & 32768 & 0.0412 &  & 0.0368 &  & 0.0308 &  & 0.0203 & \multicolumn{1}{c||}{} \\ \hline

\end{tabular}
\end{table}

\subsection{Parallel scaling results}

\begin{example}\label{cex-04-03}
  Consider problem \eqref{chp-01-01}-\eqref{chp-01-03} with $T=1.0$, $\Omega=(0,1)^2$ and the zero source term
  to illustrate that MGRIT algorithm afford good approximations to the exact solution.
\end{example}

The first numerical results are weak parallel scalabilities of the parareal (solid lines) and the truly multilevel MGRIT (dashed lines)
with the problem size per processor being $128^2\times256$, as depicted in Fig. \ref{fig-04-02}. Similar to integer order parabolic problems,
parareal is slightly faster than MGRIT only for small processor counts, but appears a strong growth in the compute time;
MGRIT is beneficial for its much better parallel scalability,
and the crossover point of MGRIT over parareal is about at 16 processors for this particular problem.
For 512 processors, the overall time-to-solution reduces from 649 and 668 seconds for parareal, respectively, to MGRIT timings of 519 and 527 seconds.

\begin{figure}[htp]
\begin{minipage}[t]{0.5\linewidth}
 \centering
 \subfigure[Uniform temporal mesh]{%
 \label{fig-04-02-a}
 \includegraphics[width=3in]{./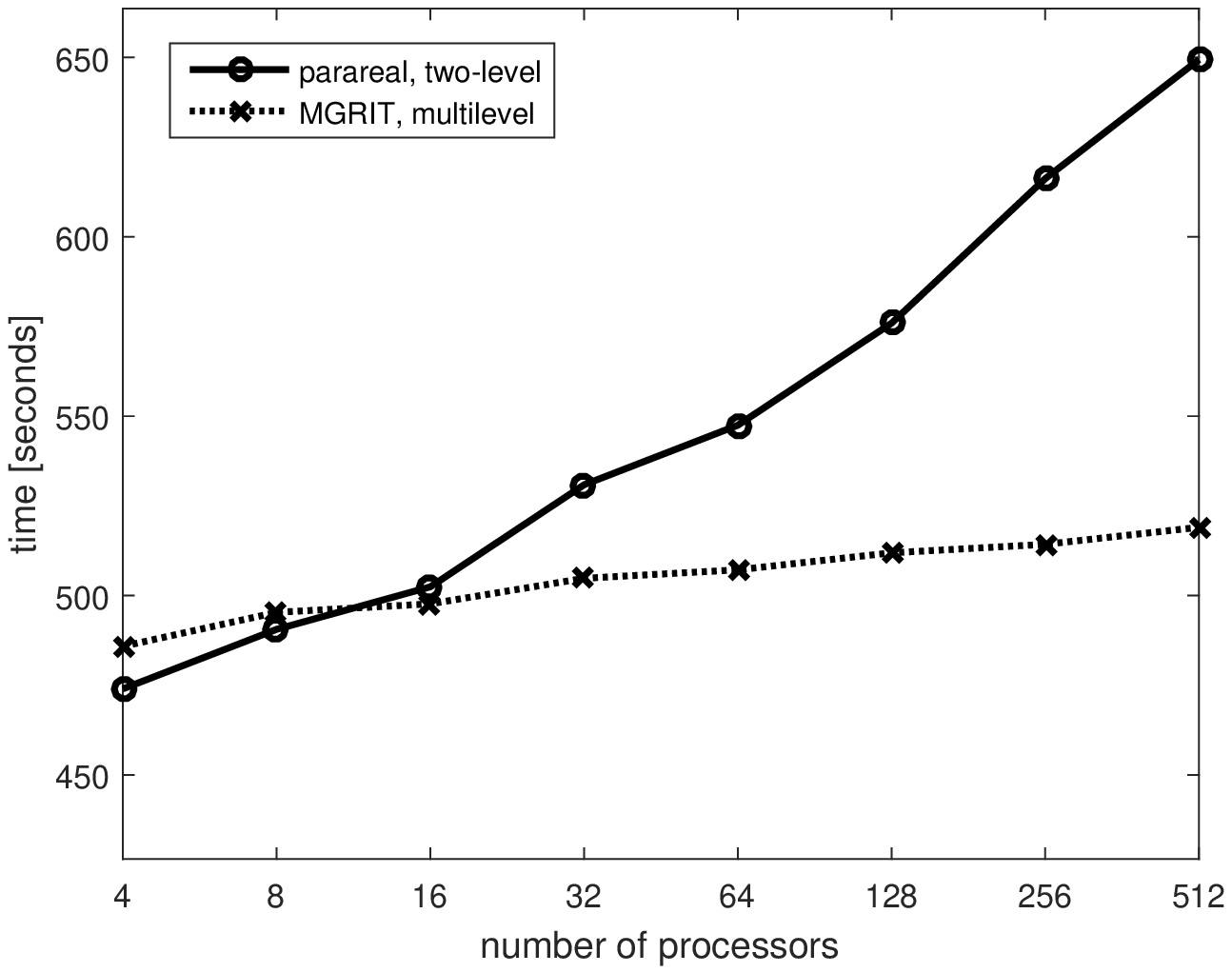}}
\end{minipage}%
\begin{minipage}[t]{0.5\linewidth}
 \centering
 \subfigure[Piecewise uniform temporal mesh]{%
 \label{fig-04-02-b}
 \includegraphics[width=3in]{./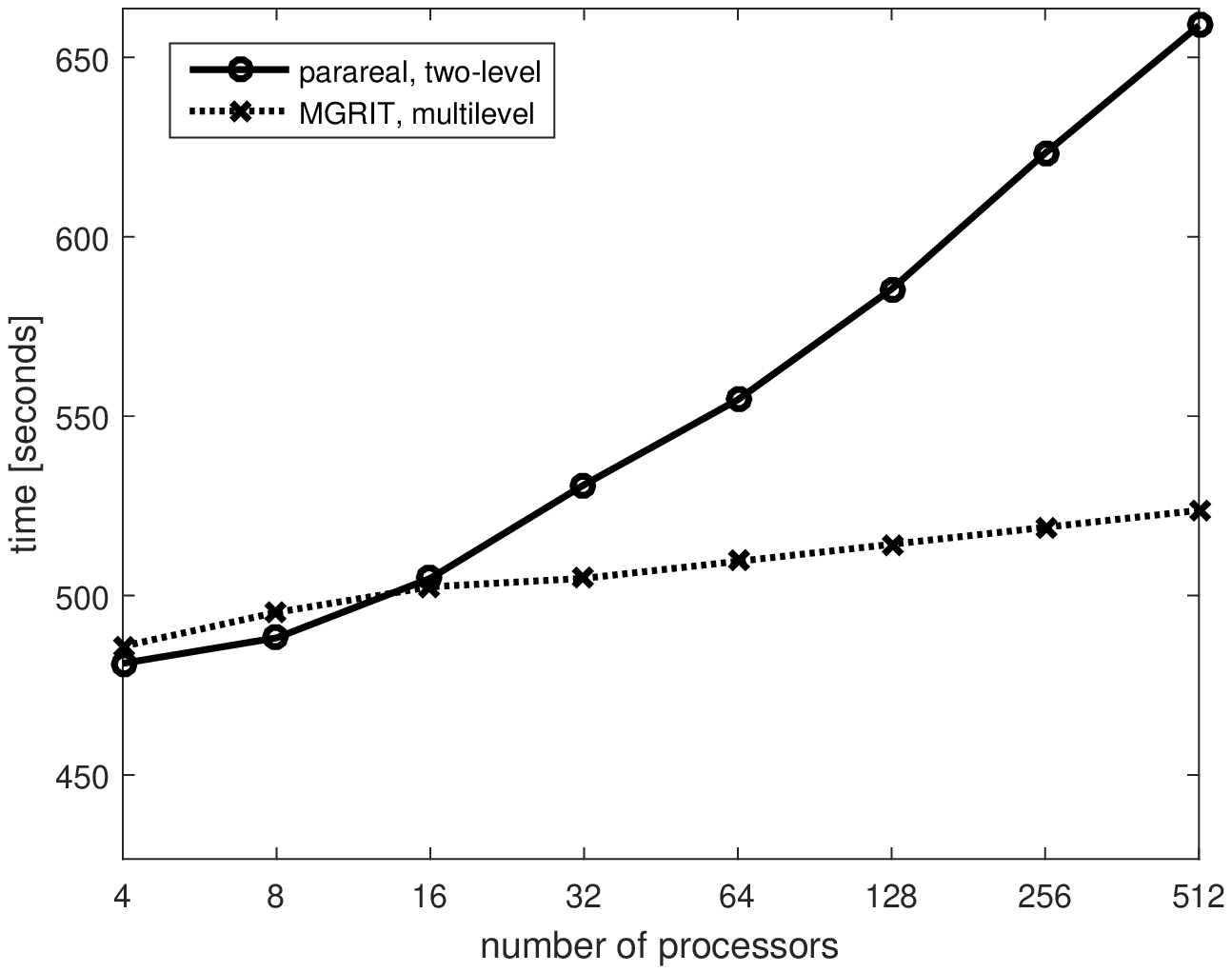}}
\end{minipage}
\caption{Comparisons on the overall time-to-solution between parareal and MGRIT in a weak scaling study.}\label{fig-04-02}
\end{figure}

The subsequent experiments are run for strong parallel scaling studies on a $128^2\times8192$ space-time grid
with an emphasis on comparing space-only parallelism (sequential time stepping) and space-time parallelism (MGRIT).
Here we utilize the factor-$m$ ($m=2,4,8$) coarsening strategies on all levels.
Fig. \ref{fig-04-03} illustrates comparisons on compute times with parareal and MGRIT both using 4 processors in spatial dimension,
because of its minimum overall time-to-solution in space-only parallelization.
The crossover point at which MGRIT becomes beneficial to use is about 32 processors, whereas 16 processors for parareal.
The time curves corresponding to the uniform temporal meshes look similar to those in Fig. \ref{fig-04-03}.
Tables \ref{ctp-04-07}-\ref{ctp-04-08} detail wall times and speedups of parareal and MGRIT,
where we measure the speedup relative to the wall time of sequential time-stepping with 4 processors.
Fig. \ref{fig-04-03-a} shows that compute times of parareal stagnate or increase slightly as of 256 processors.
In contrast, MGRIT is invariably optimistic to speed up computations.
As shown in Tables \ref{ctp-04-07}-\ref{ctp-04-08}, the best speedups are 2.046 and 4.646,
achieved respectively by processors of 128 for parareal and 512 for MGRIT both with the coarsening factor $m=8$.

\begin{figure}[htp]
\begin{minipage}[t]{0.5\linewidth}
 \centering
 \subfigure[parareal vs. time stepping]{%
 \label{fig-04-03-a}
 \includegraphics[width=3in]{./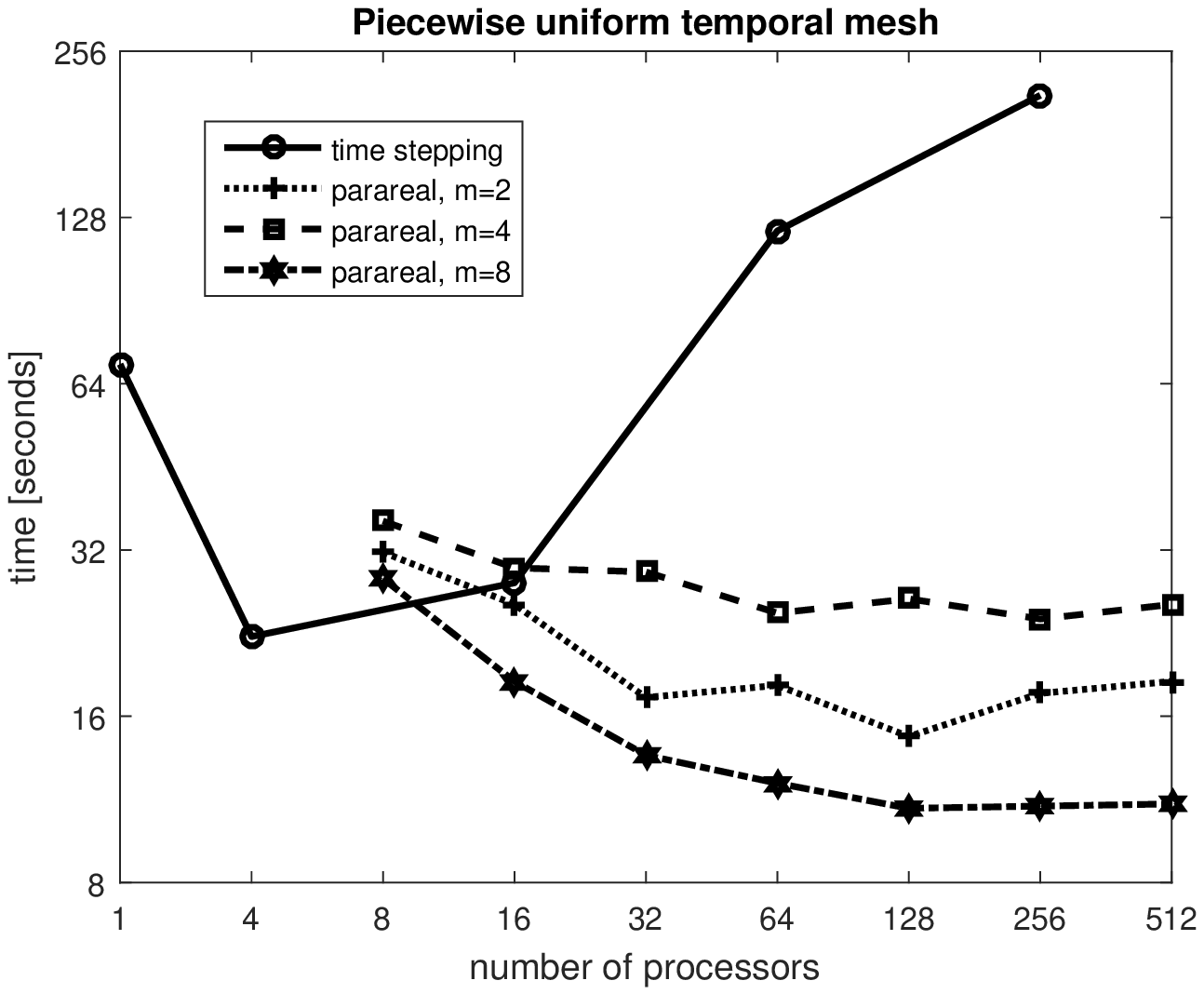}}
\end{minipage}%
\begin{minipage}[t]{0.5\linewidth}
 \centering
 \subfigure[MGRIT vs. time stepping]{%
 \label{fig-04-03-b}
 \includegraphics[width=3in]{./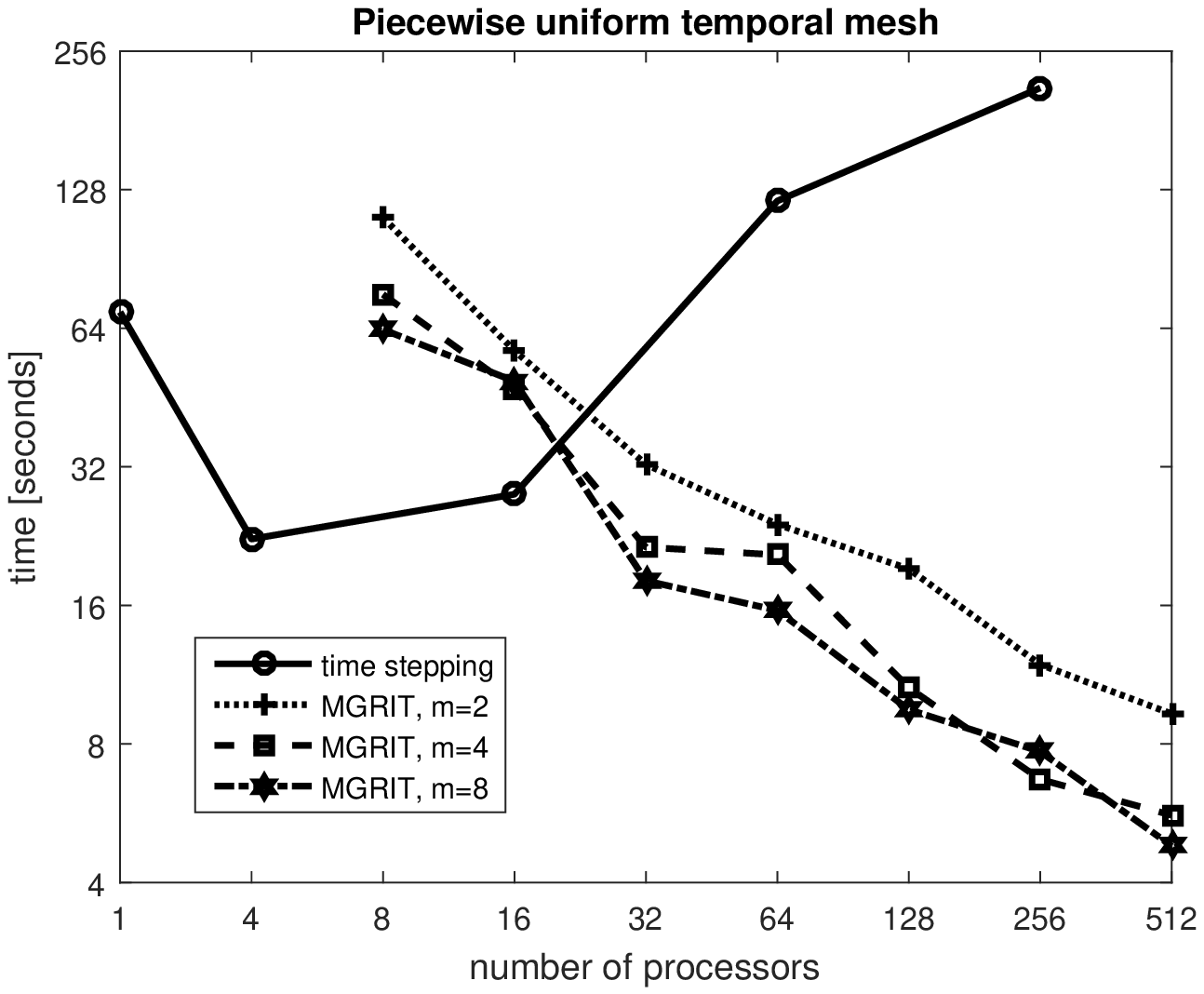}}
\end{minipage}
\caption{Comparisons on the compute time of parareal and MGRIT in a strong scaling study.}\label{fig-04-03}
\end{figure}

\begin{table}[htbp]
\small
\centering\caption{The wall time and speedup of parareal with different coarsening factor $m$.}\label{ctp-04-07}\vskip 0.1cm
\begin{tabular}{||c|c|c|c|c|c|c|c|c|c|c|c|c|c|c|c|c|c|c|}\hline
\multirow{2}{*}{\emph{np}}&\multicolumn{2}{c|}{parareal, $m=2$}
&\multicolumn{2}{c|}{parareal, $m=4$}&\multicolumn{2}{c||}{parareal, $m=8$} \\ \cline{2-7}
~& WTime & Speedup & WTime & Speedup & WTime & \multicolumn{1}{c||}{Speedup} \\ \hline

   8 & 36.1 & 0.618 & 31.7 & 0.703 & 28.4 & \multicolumn{1}{c||}{0.785} \\ \hline
  16 & 29.7 & 0.751 & 25.5 & 0.875 & 18.5 & \multicolumn{1}{c||}{1.205} \\ \hline
  32 & 29.2 & 0.764 & 17.3 & 1.289 & 13.6 & \multicolumn{1}{c||}{1.640} \\ \hline
  64 & 24.6 & 0.907 & 18.2 & 1.225 & 12.1 & \multicolumn{1}{c||}{1.843} \\ \hline
 128 & 26.1 & 0.854 & 14.7 & 1.517 & 10.9 & \multicolumn{1}{c||}{2.046} \\ \hline
 256 & 24.0 & 0.929 & 17.6 & 1.267 & 11.0 & \multicolumn{1}{c||}{2.027} \\ \hline
 512 & 25.5 & 0.875 & 18.5 & 1.205 & 11.1 & \multicolumn{1}{c||}{2.009} \\ \hline

\end{tabular}
\end{table}

\begin{table}[htbp]
\small
\centering\caption{The wall time and speedup of MGRIT with different coarsening factor $m$.}\label{ctp-04-08}\vskip 0.1cm
\begin{tabular}{||c|c|c|c|c|c|c|c|c|c|c|c|c|c|c|c|c|c|c|}\hline
\multirow{2}{*}{\emph{np}}&\multicolumn{2}{c|}{MGRIT, $m=2$}
&\multicolumn{2}{c|}{MGRIT, $m=4$}&\multicolumn{2}{c||}{MGRIT, $m=8$} \\ \cline{2-7}
~& WTime & Speedup & WTime & Speedup & WTime & \multicolumn{1}{c||}{Speedup} \\ \hline

   8 & 111.3 & 0.200 & 75.8 & 0.294 & 63.6 & \multicolumn{1}{c||}{0.351} \\ \hline
  16 &  57.2 & 0.390 & 47.2 & 0.472 & 49.1 & \multicolumn{1}{c||}{0.454} \\ \hline
  32 &  32.4 & 0.688 & 21.4 & 1.042 & 18.1 & \multicolumn{1}{c||}{1.232} \\ \hline
  64 &  24.0 & 0.929 & 20.6 & 1.083 & 15.6 & \multicolumn{1}{c||}{1.429} \\ \hline
 128 &  19.2 & 1.161 & 10.5 & 2.124 &  9.5 & \multicolumn{1}{c||}{2.347} \\ \hline
 256 &  11.9 & 1.874 &  6.7 & 3.328 &  7.7 & \multicolumn{1}{c||}{2.896} \\ \hline
 512 &   9.3 & 2.398 &  5.6 & 3.982 &  4.8 & \multicolumn{1}{c||}{4.646} \\ \hline

\end{tabular}
\end{table}

\section{Conclusion}
\label{sec5}

In this paper we added temporal parallelism on the top of the existing spatial parallelism
to speed up space-time FE discretizations of two-dimensional SFDEs, 
and generalized the two-level convergence analysis for MGRIT to the linear FE discretization of the temporal derivative
and time-dependent propagators.
Numerical results include the saturation error order of the discretization to uniform and piecewise uniform temporal partitions,
quantitatively correct predictions on convergence factors of the two-level MGRIT,
and considerable advantages in overall compute times over parareal and sequential time stepping approaches.
The introduced MGRIT method can be extended readily to implicit Runge-Kutta discretizations, three-dimensional SFDEs and integer order parabolic problems.
Our future work will be immersed in constructions of MGRIT to multidimensional nonlinear SFDEs and time fractional problems.

\section*{Acknowledgments}

This work is under auspices of National Natural Science Foundation of China (41474103, 11571293, 11601460, 11601462, 11771368),
Excellent Youth Foundation of Hunan Province of China (2018JJ1042),
Hunan Provincial Natural Science Foundation of China (2018JJ3494),
General Project of Hunan Provincial Education Department of China (16C1540, 17C1527).
The authors would like to thank Dr. M.~H.~Liu from Xiangtan University for many useful discussions.


\end{document}